\newcommand{\bmat}{\begin{matrix}}
\newcommand{\emat}{\end{matrix}}	
\newcommand{\bpm}{\begin{pmatrix}}
\newcommand{\epm}{\end{pmatrix}}
\theoremstyle{definition}
\newtheorem{theorem}{\bf Theorem}
\newcommand{\real}{\mathbb{R}}
\newcommand{\mb}{\mathbf}
\newcommand{\ls}{\leqslant}
\newcommand{\gs}{\geqslant}
\newcommand{\demi}{\frac{1}{2}}
\newcommand{\hh}{\hspace{1cm}}
\newcommand{\hd}{\hspace{5mm}}
\newcommand{\p}{\partial}
\newcommand{\er}{\eqref}
\title{Feedforward boundary control\\ of 2$\times$2 nonlinear hyperbolic systems \\ with application to Saint-Venant equations}
\author{Georges Bastin\thanks{Department of Mathematical Engineering, ICTEAM, UCLouvain, 4, Avenue G. Lemaitre, 1348 Louvain-La-Neuve, Belgium. georges.bastin@uclouvain.be} \,,\, Jean-Michel Coron\thanks{Laboratoire Jacques-Louis Lions, Sorbonne Université, Université de Paris, CNRS, INRIA, Laboratoire Jacques-Louis Lions, équipe Cage, Paris, France. coron@ann.jussieu.fr}\,\, and Amaury Hayat\thanks{Department of Mathematics, Rutgers University, Camden, USA; and CERMICS, Ecole des Ponts Paristech, Champs-sur-Marne, France}}
\date\today
\begin{document}

\maketitle

\begin{abstract}
\noindent Because they represent physical systems with propagation delays, hyperbolic systems are well suited for feedforward control. This is especially true when the delay between a disturbance and the output is larger than the control delay. In this paper, we address the design of feedforward controllers for a general class of $2 \times 2$ hyperbolic systems with a single disturbance input located at one boundary and a single control actuation at the other boundary. The goal is to design a feedforward control that makes the system output insensitive to the measured disturbance input. We show that, for this class of systems, there exists an efficient ideal feedforward controller which is causal and stable. The problem is first stated and studied in the frequency domain for a simple linear system. Then, our main contribution is to show how the theory can be extended, in the time domain, to general nonlinear hyperbolic systems. The method is illustrated with an application to the control of an open channel represented by Saint-Venant equations where the objective is to make the output water level insensitive to the variations of the input flow rate. Finally, we address a more complex application to a cascade of pools where a blind application of perfect feedforward control can lead to detrimental oscillations. A pragmatic way of modifying the control law to solve this problem is proposed and validated with a simulation experiment.\\

\noindent \textbf{Keywords: }Feedforward control, Hyperbolic systems, Saint-Venant equations.	
\end{abstract}

\section{Introduction}

Feedforward control is a technique which is of interest when the system to be controlled is subject to a significant input disturbance that can be measured and compensated before it affects the system output. This control technique is used in many control engineering applications, especially in the industrial process sector (e.g. \cite[Chapter 15]{SebEdgMel11}). In ideal situations, feedforward control is an open-loop technique which is theoretically able to achieve perfect control by anticipating adequately the effect of the perturbations. This is in contrast with closed-loop feedback control where corrective actions take place necessarily only after the effect of the disturbances has been detected at the output. However ideal feedforward controllers, which are based on some sort of process model inversion, may not be physically realizable because they can be non causal and/or unstable (e.g. \cite{Gla19}). In such situations, it is common practice to design approximate low order realizable feedforward controllers, possibly in combination with feedback control (e.g. \cite{GuzHag11}, \cite{HasHag12}). Furthermore, it is also well known that, in some instances, feedforward control may be a simple and low cost way of avoiding loss of stability due to actuator saturations in feedback loops (e.g. \cite{Deu17a}, \cite{HovBit09}). For finite dimensional linear systems, the theory of feedforward control is well established and the basics can be found, for instance, in the classical textbooks \cite{MorZaf89} and \cite{SebEdgMel11}. 

In this paper, we are concerned with the application of the feedforward technique to the boundary control of 1-D hyperbolic systems. Our purpose is to address the design of feedforward controllers for a general class of $2 \times 2$ hyperbolic systems with a single disturbance input located at one boundary and a single control actuation at the other boundary. This class of systems includes many potential interesting applications, including those that are listed in the book \cite[Chapter 1]{BasCor14} for example.

Hyperbolic systems generally represent physical phenomena with important propagation delays. For that reason, they are particularly suitable for the implementation of feedforward control, especially when the input/output disturbance delay is larger than the control delay. In that case, as we shall see in this paper, it is indeed possible to design efficient ideal feedforward controllers that are causal and stable, and significantly improve the system performance.

For hyperbolic systems with \textbf{unmeasurable} disturbance inputs produced by a so-called exogenous ``signal system'', the asymptotic \textit{closed-loop} rejection of disturbances by feedback of measurable outputs was extensively considered in the literature in the recent years, especially in the backstepping framework. Significant contributions on this topic have been published, among others, by Ole Morten Aamo (e.g.\cite{Aam13}, \cite{AnfAam17b}) and Joachim Deutsher (e.g.\cite{Deu17b}, \cite{DeuGab19}) and their collaborators. It is worth noting that the viewpoint adopted in the present paper is rather different, since we consider  systems having an arbitrary and \textbf{measurable} disturbance input for which it is desired to design an \textit{open-loop} control of an output variable which is not measured. We show that, for the class of systems considered in this paper, there exists an ideal causal controller that achieves perfect control with stability.

The feedforward control problem considered in this paper is defined and presented in the next Section 2. The physical system to be controlled is described by a $2 \times 2$ quasi-linear hyperbolic system with a density $H$ and a flow density $Q$ as state variables. The goal is to design a \textit{feedforward control law} that makes the system output insensitive to the measured disturbance input. The overall control system is represented by the simple block diagram shown in Figure \ref{configFFsystem}.

In Section 3, we first examine the simplest linear case, i.e. a physical system of two linear conservation laws with constant characteristic velocities. The reason for beginning in this way is that it allows an explicit and complete mathematical analysis of the feedforward control design in the frequency domain. Furthermore it also allows to derive  an expression of the control law in the time domain that can then be used to justify the feedforward control design in the general nonlinear case.

Section 4 is then devoted to a theoretical analysis of the feedforward control design in the general nonlinear case. It is first shown that there exists an ideal causal feedforward dynamic controller that achieves perfect control. In a second step, sufficient  conditions are given under which the controller, in addition to being causal, ensures the stability of the overall control system.

Applications are then presented. First, in Section 5, generalizing the previous results of  \cite{BasCordAn05} and  \cite[Section 9]{LitFro09c}, it is shown how the theory can be directly applied to the control of an open channel whose dynamics are represented by the Saint-Venant equations. The control action is provided by a hydraulic gate  at the downstream side of the channel. The control objective is to make the output water level insensitive to the variations of the input flow rate at the upstream side. The method is illustrated with a realistic simulation experiment.

Then in Section 6, we address the more complex application of the control of a long canal made up of a cascade of a large number of successive pools, as it is the case in navigable rivers for instance. It is then shown that, in this case, a blind application of perfect feedforward controllers leads to oscillations in the downstream direction that can be detrimental in practice. A pragmatic and efficient way of modifying the control design to solve this problem is proposed and validated with simulation results. 

\section{The feedforward control problem}

Let us consider a physical system represented by a general 2$\times$2 nonlinear hyperbolic system of the form
\begin{gather}
H_t + Q_x = 0, \label{mod1}\\[0.5em]
Q_t + (f(H,Q))_x + g(H,Q) = 0, \label{mod2}
\end{gather}
where~:
\begin{itemize}
\item $t$ and $x$ are the two independent variables: a time variable $t \in [0, + \infty)$ and a space variable $x \in [0, L]$ on a finite interval;
\item $(H,Q) : [0, + \infty) \times [0, L] \rightarrow \real^2$ is the vector of the two dependent variables (i.e. $H(t,x)$ and $Q(t,x)$ are the two states of the system);
\item $f:\real^2 \rightarrow \real$ and $g:\real^2 \rightarrow \real$ are sufficiently smooth functions.
\end{itemize}
The first equation \eqref{mod1} can be interpreted as a mass conservation law with $H$ the density and $Q$ the flux density. The second equation \eqref{mod2} can then be interpreted as a momentum balance law.

We are concerned with the solutions of the Cauchy problem for the system \eqref{mod1}--\eqref{mod2} over $[0, + \infty) \times [0, L]$ under an initial condition:
\begin{equation} \label{icmod1}
 \big(H(0,x), Q(0,x) \big) \hh x \in [0,L]
\end{equation}
and two local boundary conditions of the form:
\begin{gather}
\alpha(H(t,0), Q(t,0)) = D(t), \hd t\in [0, + \infty) \label{bc1},\\[0.5em]
\beta( H(t,L), Q(t,L)) = U(t), \hd t\in [0, + \infty)  \label{bc2},
\end{gather}
where $\alpha:\real^2 \rightarrow \real$, $\beta:\real^2 \rightarrow \real$ are sufficiently smooth functions. 

At the left boundary (i.e. $x=0$), the function $D(t)$ is supposed to be a bounded measurable time-varying disturbance. At the right boundary (i.e. $x=L$), $U(t)$ is a control function that can be freely selected by the operator.

The control objective is to keep the output density $H(t,L)$ insensitive to the variations of the disturbance $D(t)$. More precisely, we consider the problem of finding a {\textit {feedforward control law}} $U(t)$, function of the measured disturbance $D(t)$, such that the output density $H(t,L)$ is identically equal to a desired value $H^*_L$ (called `set point'), i.e. $H(t,L) \equiv H^*_L$ $\forall t$. Equivalently it is required that the output function  $Y(t) = H(t,L) - H^*_L$ is identically zero along the solutions of the Cauchy problem. In less technical terms, we want a control law which exactly cancels the influence of the left boundary disturbance $D(t)$ on the right boundary state $H(t,L)$. The overall control system configuration is illustrated in Fig.\ref{configFFsystem}. As we can see in this figure, we thus have a series interconnection of two dynamical systems : the ``Physical System'' with output $Y(t)$ and inputs $D(t), U(t),$ and the feedforward controller with output $U(t)$ and input $D(t)$. 

\begin{figure}[hbt]
  \includegraphics[width=0.90\textwidth]{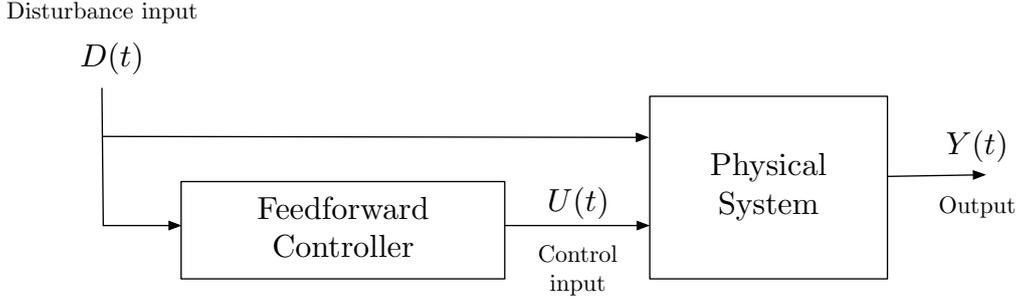}
  \centering
  \caption{Configuration of the control system with feedforward.}
  \label{configFFsystem}
\end{figure}

\section{A preliminary simple linear case} \label{sectionlinear}

Let us first examine the special case where the physical system is a simple 2$\times$2 linear hyperbolic system of the form
\begin{equation} \label{modlin}
\bmat H_t + Q_x = 0 \\[0.5em]
Q_t + \big(a H + b Q \big)_{\! x} = 0, \emat
\end{equation}
where $a$ and $b$ are two real positive constants, with boundary conditions
\begin{equation} \label{bclin}
Q(t,0) = D(t), \hh Q(t,L) - \gamma H(t,L) = U(t),	
\end{equation}
where $\gamma$ is a real constant.

The system is hyperbolic with one positive and one negative characteristic velocity which are defined as
\begin{equation}
\lambda_1 = \dfrac{b + \sqrt{b^2 + 4a}}{2} \;\; \text{and} \;\; -\lambda_2 = \dfrac{b - \sqrt{b^2 + 4a}}{2}.
\end{equation}
The reason for beginning in this way is that the simple linear system \eqref{modlin}, \eqref{bclin} allows an explicit and complete mathematical analysis of the feedforward control design. It is therefore an excellent starting point before to address the general nonlinear case for which less explicit and more complicated solutions will be discussed later on. 

\subsection{Feedforward control design in the frequency domain}

In order to solve the feedforward control problem for the linear system \eqref{modlin}, \eqref{bclin}, we introduce the {\textit {Riemann coordinates}} defined by the following change of coordinates:
\begin{equation} \label{riemcoor}
\begin{split} 
R_1(t,x) &= Q(t,x) - D(0) + \lambda_2 (H(t,x) - H^*_L), \\
R_2(t,x) &= Q(t,x) - D(0) - \lambda_1 (H(t,x) - H^*_L), 
\end{split} \end{equation}
where, as mentioned in the introduction, $H^*_L$ denotes the set point for the output variable $H(t,L)$.

This change of coordinates is inverted as follows:
\begin{equation} \label{coinv}
\begin{split} 
H(t,x) - H^*_L &=  \frac {R_1(t,x) - R_2(t,x)}{\lambda_1 + \lambda_2}, \\[0.5em]
Q(t,x) - D(0) &= \frac {\lambda_1 R_1(t,x) + \lambda_2 R_2(t,x)}{\lambda_1 + \lambda_2}.
\end{split}
\end{equation}
With these Riemann coordinates, the system \eqref{modlin} is rewritten in characteristic form as a set of two transport equations:
\begin{equation} \label{dimod}
\begin{split} 
&\p_t R_1(t,x) + \lambda_1 \p_x R_1(t,x) = 0, \\
&\p_t R_2(t,x) - \lambda_2 \p_x R_2(t,x) = 0, 
\end{split}
\end{equation}
or, equivalently, as the following two delay equations:
\begin{equation} \label{eqdelay} 
\begin{split} 
&R_1(t,L) = R_1(t-\tau_1,0) \;\text{ with } \;\tau_1 = L/\lambda_1, \\
&R_2(t,0) = R_2(t- \tau_2,L) \;\text{ with } \;\tau_2 = L/\lambda_2.
\end{split}
\end{equation}
Taking the Laplace transform of \eqref{eqdelay}, the system is written as follows in the frequency domain (with ``s'' the Laplace complex variable):
\begin{equation} \label{sdelay}
\begin{split} 
&R_1(s,L) = e^{-s \tau_1} R_1(s,0), \\
&R_2(s,0) = e^{-s \tau_2} R_2(s,L).
\end{split}
\end{equation}
Moreover, the inputs $D(t)$ and $U(t)$ are represented in the frequency domain by the following Laplace transforms:
\begin{equation} \label{lapinputs}
\widetilde U(s) = \mathcal{L}\big( U(t) - D(0) + \gamma H^*_L \big), \hd \widetilde D(s)= \mathcal{L} \big( D(t) - D(0)\big),	
\end{equation}
where $\mathcal{L}$ denotes the Laplace transform operator. 

Then, using the boundary conditions \eqref{bclin} with the change of coordinates \eqref{coinv}, the system equations \eqref{sdelay} and the definitions \eqref{lapinputs}, the input-output system dynamics are computed as follows~:
\begin{equation} 
Y(s) = P_{\! o}(s) \widetilde U(s) + P_{\! d}(s)\widetilde D(s)  \label{io2},
\end{equation}
with, for $\tau = \tau_1 + \tau_2$, the two transfer functions:
\begin{align}
&P_{\! o}(s) = - \dfrac{\lambda_1 + \lambda_2 e^{-s \tau}} {\lambda_1(\gamma + \lambda_2) + \lambda_2 (\gamma - \lambda_1) e^{-s \tau}} \;, \label{Po(s)}\\[1em]
&P_{\! d}(s) = \dfrac{(\lambda_1 + \lambda_2) e^{-s \tau_1}} {\lambda_1(\gamma + \lambda_2 ) + \lambda_2 (\gamma - \lambda_1) e^{-s \tau}} \;.	\label{Pd(s)}
\end{align}
Let us now assume uniform initial conditions at time $t=0$:
\begin{equation} \label{initcond}
H(0,x) = H^*_L, \hd Q(0,x) = D(0), \hd \text{for all } x \in [0,L].	
\end{equation}
Then it follows from \eqref{io2} that, in order to satisfy the control objective $H(t,L) = H^*_L$ $\forall t > 0$, or equivalently in Laplace coordinates $Y(s) = 0$ $\forall s $, we must select the feedforward boundary control law $\widetilde U(s)$ in function of the boundary disturbance $\widetilde D(s)$ such that
\begin{equation} \label{controlfreq}
\widetilde U(s) =  P_{\! c}(s) \, \widetilde D(s) \hd \text{with} \hd  P_{\! c}(s) = - P^{-1}_{\! o}(s) \, P_{\! d}(s) = \dfrac{(\lambda_1 + \lambda_2) e^{-s\tau_1}}{\lambda_1 + \lambda_2 e^{-s\tau}}.
\end{equation}
or, in the time domain, as:
\begin{equation} \label{contlawtemp}
U(t) = - \dfrac{\lambda_2}{\lambda_1} U(t - \tau) +  \left(1 + \dfrac{\lambda_2}{\lambda_1}\right) D(t - \tau_1) - \gamma \left(1 + \dfrac{\lambda_2}{\lambda_1}\right)H^*_L.
\end{equation}
Hence, we can see that the feedforward controller $P_{\! c}(s)$ seems to achieve the desired purpose. There is however an important limitation: the result is obtained under the assumption that the initial condition \eqref{initcond} is uniform and that the initial output density $H(0,L)$ is already at the set point $H^*_L$. If this assumption is not verified, then initial transients may appear. Obviously, such transients will vanish exponentially only if the system is exponentially stable. Hence, the practical implementation of the feedforward controller clearly requires the stability of the transfer functions of the system. From \eqref{Po(s)}, \eqref{Pd(s)} we can see that the transfer functions  $P_{\! o}(s)$ and $P_{\! d}(s)$ have the same poles that are stable if and only if $\lambda_1$, $\lambda_2$ and $\gamma$ satisfy the following inequality 
\begin{equation} \label{condition1}
 \left|\dfrac{\gamma - \lambda_1}{\gamma + \lambda_2}\right| < \dfrac{\lambda_1}{\lambda_2}.	
\end{equation}
Moreover the transfer function $P_{\! c}(s)$ of the controller has stable poles if and only if
\begin{equation} \label{condition2}
	\dfrac{\lambda_2}{\lambda_1} < 1.
\end{equation}  
These conditions imply in particular that, starting from any arbitrary initial condition,  the states of the physical system and the controller are bounded  and that $H(t,L)$ asymptotically converges to the set-point
\begin{equation}
\lim_{t \rightarrow \infty} H(t,L) = H^*_L,	
\end{equation}
such that the feedforward control objective is achieved as soon as the initial transients have vanished. Remark that in the case where Condition \eqref{condition1} is not satisfied, the feedforward controller can be combined with a feedback controller in order to ensure the global system stability. However, for the sake of simplicity and clarity, we will not study this issue in more detail in this article.

\subsection{Feedforward control design in the time domain}

Let us now show that the same feedforward control law can also be computed in another way, as the output of the following copy of the system \eqref{mod1}-\eqref{mod2}:
\begin{equation} \label{modcontrol}
\bmat \widehat H_t + \widehat Q_x = 0,  \\[0.5em]
\widehat Q_t +  \big(a \widehat H + b \widehat Q\big)_{\! x} = 0, \emat
\end{equation}
with the boundary conditions:
\begin{equation} \label{bcc}
\begin{split}
&\widehat Q(t,0) = D(t), \\ &\widehat H(t,L) = H^*_L.	
\end{split}
\end{equation}
We are going to show that the control $U(t)$ given by \eqref{contlawtemp} is equivalently given by:
\begin{equation} \label{bccontrol}
U(t) = \widehat Q(t,L) - \gamma H^*_L.	
\end{equation}
Again we use the Riemann coordinates 
\begin{equation} \label{riemcor2}
\begin{split}
\widehat R_1(t,x) &= \widehat Q(t,x) - D(0) + \lambda_2 (\widehat H(t,x) - H^*_L), \\[0.5em]
\widehat R_2(t,x) &= \widehat Q(t,x) - D(0) - \lambda_1 (\widehat H(t,x) - H^*_L),
\end{split}
\end{equation}
which are inverted as
\begin{equation}\label{invriemcor2}
\begin{split}
&\widehat H(t,x) - H^*_L = \dfrac{\widehat R_1(t,x) - \widehat R_2(t,x)}{\lambda_1 + \lambda_2},  \\[0.5em]
&\widehat Q(t,x) - D(0) = \dfrac{\lambda_1 \widehat R_1(t,x) + \lambda_2 \widehat R_2(t,x)}{\lambda_1 + \lambda_2}.
\end{split}
\end{equation}
In these coordinates, the system \eqref{modcontrol} is written as follows:
\begin{equation}
\begin{split}
&\p_t \widehat R_1(t,x) + \lambda_1 \p_x \widehat R_1(t,x) = 0, \\
&\p_t \widehat R_2(t,x) - \lambda_2 \p_x \widehat R_2(t,x) = 0,
\end{split}
\end{equation}
or equivalently as the delay system:
\begin{subequations} 
\begin{align}
&\widehat R_1(t,L) = \widehat R_1(t-\tau_1, 0), \label{delay1}\\
&\widehat R_2(t,0) = \widehat R_2(t-\tau_2, L).	\label{delay2}
\end{align}
\end{subequations}
In the Riemann coordinates, the boundary conditions \eqref{bcc} are written:
\begin{subequations} \label{bcriemcont}
\begin{align} 
&\widehat R_1(t,0) = - \dfrac{\lambda_2}{\lambda_1} \widehat R_2(t,0) + \left(1 + \dfrac{\lambda_2}{\lambda_1} \right) (D(t) - D(0)), \label{bcriem1}\\
&\widehat R_2(t,L) = \widehat R_1(t,L).	 \label{bcriem2}
\end{align}
\end{subequations}
From \eqref{invriemcor2}, \eqref{delay1} and \eqref{bcriem2}, we have:
\begin{align}  
\widehat Q(t,L) = \dfrac{\lambda_1 \widehat R_1(t,L) + \lambda_2 \widehat R_2(t,L)}{\lambda_1 + \lambda_2} + D(0) &= \widehat R_1(t,L) + D(0) \nonumber \\[0.5em] &= \widehat R_1(t-\tau_1,0) + D(0). \label{eqaux1}
\end{align}
Since the boundary condition \eqref{bcriem1} is obviously valid at any time instant, and using 
\eqref{eqaux1}, we have:
\begin{align} 
\widehat Q(t,L) &= \widehat R_1(t - \tau_1, 0) + D(0) \nonumber \\[0.5em] &= - \dfrac{\lambda_2}{\lambda_1} \widehat R_2(t-\tau_1,0) + \left(1 + \dfrac{\lambda_2}{\lambda_1} \right) \Big(D(t-\tau_1) - D(0) \Big).	\label{eqaux2}
\end{align}
Moreover, using successively \eqref{delay2}, \eqref{bcriem2} and \eqref{eqaux1}, we have:
\begin{align} 
\widehat R_2(t-\tau_1, 0) = \widehat R_2(t - \tau, L) &= \widehat R_1(t-\tau, L) = \widehat Q(t-\tau, L) - D(0). \label{eqaux3}	
\end{align}
Then, combining \eqref{eqaux2} and \eqref{eqaux3}, we get:
\begin{equation}
\widehat Q(t,L) = - \dfrac{\lambda_2}{\lambda_1} \widehat Q(t-\tau,L) + \left(1 + \dfrac{\lambda_2}{\lambda_1} \right) D(t-\tau_1).	
\end{equation}
Comparing this equation with \eqref{contlawtemp}, we finally conclude that, as announced, the control law is given by:
\begin{equation} \label{intercon}
U(t) = \widehat Q(t,L) - \gamma H^*_L.
\end{equation}
This expression of the feedforward control law in the time domain is of special interest to motivate its extension to nonlinear systems as we shall see in the next section.

\section{The general nonlinear case}

In this section, we now address the feedforward control design problem for the general nonlinear physical system of Section 1, i.e.
\begin{equation} \label{physyst}
\begin{matrix}   
H_t + Q_x = 0, \\[0.5em]
Q_t + (f(H,Q))_x + g(H,Q) = 0, 
\end{matrix}
\end{equation}
\vspace{0.2em}
\begin{equation}\label{physystBC}
\begin{matrix} 
\alpha(H(t,0), Q(t,0)) = D(t),  \\[0.5em]
\beta( H(t,L), Q(t,L)) = U(t).  
\end{matrix}
\end{equation}
The objective is to keep the output density $H(t,L)$ at the setoint $H^*_L$ despite the disturbance $D(t)$. 

We assume that all the required conditions are met for this system to be well posed and have a unique solution in the domain of interest. The existence and uniqueness of solutions is a topic which is the subject of numerous publications. We do not address this issue in this article but we refer the reader to the paper \cite{Wan06} by Zhiqiang Wang (and the references therein) where explicit conditions for hyperbolic systems of the form \eqref{physyst}, \eqref{physystBC} are given.

\subsection{Feedforward control design}
Obviously the frequency method is not relevant for a nonlinear system such as \eqref{physyst}, \eqref{physystBC}. However, from our analysis of the linear case in the previous section, a natural way  to generalize the  control design to nonlinear systems in the time domain is as follows. We use a copy of the system \eqref{physyst}:
\begin{equation} \label{feedcont}
\begin{matrix} 
\widehat H_t + \widehat Q_x = 0,  \\[0.5em]
\widehat Q_t + (f(\widehat H,\widehat Q))_x + g(\widehat H,\widehat Q) = 0, 
\end{matrix}
\end{equation}
with the boundary conditions:
\begin{equation} \label{bcfeedcont}
\begin{matrix} 
\alpha(\widehat H(t,0), \widehat Q(t,0)) = D(t), \\[0.5em]
\widehat H(t,L) = H^*_L,	
\end{matrix}
\end{equation}
and with the feedforward control defined as:
\begin{equation} \label{nlcontlaw}
	U(t) = \beta(H^*_L, \widehat Q(t,L)).
\end{equation}
In the next theorem, it is shown that this feedforward controller  \eqref{feedcont}, \eqref{bcfeedcont}, \eqref{nlcontlaw} achieves the desired purpose.
\begin{theorem} \label{theorem1}
Assume that both systems \eqref{physyst}, \eqref{physystBC} and \eqref{feedcont}, \eqref{bcfeedcont} are interconnected with the control law \eqref{nlcontlaw} and have the same initial condition
\begin{equation} \label{initnl}
H(0,x) = \widehat H(0,x), \;\; Q(0,x) = \widehat Q(0,x), \;\; \text{for all } x \in [0,L],
\end{equation}
with $H(0,L) = \widehat H(0,L) = H^*_L$.
Then, for all positive $t$ it holds that $H(t,L) = H^*_L$.
\end{theorem}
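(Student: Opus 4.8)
The plan is to prove the stronger statement that the two state trajectories coincide everywhere, namely $(H,Q)(t,x) = (\widehat H,\widehat Q)(t,x)$ for all $(t,x)\in[0,+\infty)\times[0,L]$, from which $H(t,L)=\widehat H(t,L)=H^*_L$ follows at once from the right boundary condition of \eqref{bcfeedcont}. The whole argument rests on the uniqueness of solutions of the physical initial--boundary value problem, which is granted by the standing well-posedness assumption.

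First I would record that the controller \eqref{feedcont}--\eqref{bcfeedcont} is itself a standard initial--boundary value problem, driven by the measured disturbance $D(t)$ through the left condition $\alpha(\widehat H(t,0),\widehat Q(t,0))=D(t)$ and by the constant datum $\widehat H(t,L)=H^*_L$ at the right boundary. It therefore possesses a unique solution $(\widehat H,\widehat Q)$, computed independently of the physical system, and this solution merely produces the scalar signal $\widehat Q(t,L)$ that feeds the control law \eqref{nlcontlaw}.

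The key step is then to verify that this controller trajectory is itself an admissible solution of the physical closed-loop problem \eqref{physyst}--\eqref{physystBC}. There are four items to check. The pair $(\widehat H,\widehat Q)$ satisfies the same conservation and balance equations \eqref{physyst}, since \eqref{feedcont} is an identical copy; it matches the prescribed initial data by hypothesis \eqref{initnl}; and it satisfies the left boundary condition $\alpha(\cdot,\cdot)=D(t)$, again by \eqref{bcfeedcont}. The only nonroutine point---and the crux of the entire construction---is the actuated boundary condition at $x=L$: using $\widehat H(t,L)=H^*_L$ together with the definition \eqref{nlcontlaw} of the control, I would simply compute
\[
\beta(\widehat H(t,L),\widehat Q(t,L)) = \beta(H^*_L,\widehat Q(t,L)) = U(t),
\]
so that $(\widehat H,\widehat Q)$ does obey $\beta(\widehat H(t,L),\widehat Q(t,L))=U(t)$. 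In other words, the control law \eqref{nlcontlaw} is designed precisely so that the controller's own boundary behaviour fulfils the physical system's actuated boundary condition.

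Having shown that $(\widehat H,\widehat Q)$ solves exactly the same initial--boundary value problem as $(H,Q)$---same equations, same initial condition, same pair of boundary conditions---I would invoke the assumed uniqueness to conclude $(H,Q)\equiv(\widehat H,\widehat Q)$ on $[0,+\infty)\times[0,L]$, whence $H(t,L)=\widehat H(t,L)=H^*_L$ for every $t>0$. I do not anticipate a genuine analytic obstacle, since well-posedness and uniqueness are assumed outright; the only point requiring care is to confirm that, from the viewpoint of the physical system, $U(t)$ is simply a prescribed time-dependent boundary datum produced by the controller, so that the uniqueness theorem for problems with given boundary data legitimately applies.
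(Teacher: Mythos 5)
Your proposal is correct and follows essentially the same route as the paper: you verify that the controller trajectory $(\widehat H,\widehat Q)$ satisfies the physical system's equations, initial data, and both boundary conditions (the actuated one precisely because $\beta(H^*_L,\widehat Q(t,L))=U(t)$ by construction), and then invoke uniqueness to conclude $(H,Q)\equiv(\widehat H,\widehat Q)$. Your write-up is in fact slightly more explicit than the paper's, which leaves the boundary-condition verification as ``immediately clear,'' but the argument is identical in substance.
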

\begin{proof}
Let us first observe that the condition \eqref{initnl} implies not only that the two systems have the same initial condition but also that they have identical boundary conditions at the initial time $t=0$:
\begin{align}
&D(0) = \alpha(H(0,0), Q(0,0)) = \alpha(\widehat H(0,0), \widehat Q(0,0)) , \\[0.5em]
&U(0) = \beta(H^*_L, Q(0,L)) = \beta(H^*_L, \widehat Q(0,L)).	
\end{align}
Then it is immediately clear that the solution $\widehat H(t,x), \widehat Q(t,x)$ (with $\widehat H(t,L) = H^*_L \, \forall t$) of the system \eqref{feedcont}, \eqref{bcfeedcont} is also a possible solution of the system \eqref{physyst}, \eqref{physystBC}, i.e $H(t,x) = \widehat H(t,x)$ and $Q(t,x)) = \widehat Q(t,x)$ for all $t$ and $x$. Since the solution of the system \eqref{physyst}, \eqref{physystBC} is unique, the result follows.
\end{proof}

This theorem shows however that the stability issue mentioned in the linear case is still present here. The feedforward control leads to an exact cancellation of the disturbance for all $t \gs 0$ only if the physical system and the feedforward controller have exactly identical initial conditions. Otherwise some sort of stability of the solutions is required to guarantee an asymptotic decay of the initial transients. As already pointed out by T. Glad \cite{Gla19} for finite dimensional nonlinear systems, Lyapunov theory helps to discuss this stability issue as we shall see in the next section.  

\subsection{Stability conditions}
Our purpose in this section is to derive sufficient stability conditions for the overall control system  \eqref{physyst}, \eqref{physystBC}, \eqref{feedcont}, \eqref{bcfeedcont}, \eqref{nlcontlaw}.

A steady state is a system solution that does not change over time. We assume that, for any set point $H^*_L$ and any given constant disturbance input $D(t) = D^*$ for all $t$, the system has a unique well-defined steady state $H(t,x) = \widehat H(t,x) = H^*(x)$, $Q(t,x) = \widehat Q(t,x) = Q^*$ for all $t$. The steady state flux density $Q^*$ is uniform on the domain $[0,L]$ and the steady state density function $H^*(x)$ is a solution of the ordinary differential equation
\begin{equation} \label{ss}
	\big(f(H^*,Q^*)\big)_{\! x} + g(H^*,Q^*) = 0, \hd H^*(L) = H^*_L, \hd x \in[0,L].
\end{equation}
In order to linearize the system, we define the deviations of the disturbance input $D(t)$ and  the states $H(t,x)$, $\widehat H(t,x)$, $Q(t,x)$, $\widehat Q(t,x)$ with respect to the steady states $D^*$, $H^*(x)$ and $Q^*$:
\begin{equation}
\begin{split}
&d(t) = D(t) - D^*, \\[0.5em]
	&h(t,x) = H(t,x) - H^*(x), \hd q(t,x) = Q(t,x) - Q^*, \\[0.5em]
	&\hat h(t,x) = \widehat H(t,x) - H^*(x), \hd \hat q(t,x) = \widehat Q(t,x) - Q^*.
	\end{split}
\end{equation}
With these notations the linearization of the physical system \eqref{physyst}, \eqref{physystBC}
 about the steady state is 
\begin{equation} \label{systdev}
\begin{split}
&h_t + q_x = 0, \\[0.5em]
&q_t + a(x) h_x + b(x) q_x + \big(a_x(x) + \tilde a(x)\big)h + \big(b_x(x) + \tilde b(x)\big)q = 0, 
\end{split} 
\end{equation}
with the boundary conditions
\begin{equation} \label{bcsystdev}
\begin{split}	
	& \alpha_{h} h(t,0) + \alpha_{q} q(t,0) = d(t), \\[0.5em]
	& \beta_{h} h(t,L) + \beta_{q} q(t,L) = \beta_{q}  \hat q(t,L). 
\end{split}
\end{equation}
In these equations, we use the following notations:
\begin{equation}
\begin{split}	
	&a(x) = \dfrac{\p f}{\p H}(H^*(x), Q^*), \hd b(x) = \dfrac{\p f}{\p Q}(H^*(x), Q^*), \\[0.5em]
	&\tilde a(x) = \dfrac{\p g}{\p H}(H^*(x), Q^*), \hd \tilde b(x) = \dfrac{\p g}{\p Q}(H^*(x), Q^*), \\[0.5em]
	&\alpha_{h} = \dfrac{\p \alpha}{\p H}(H^*(0), Q^*), \hd \alpha_{q} = \dfrac{\p \alpha}{\p Q}(H^*(0), Q^*) \\[0.5em]
	&\beta_{h} = \dfrac{\p \beta}{\p H}(H^*_L, Q^*), \hd  \beta_{q} = \dfrac{\p \beta}{\p Q}(H^*_L, Q^*).
\end{split}
\end{equation}
Similarly, the linearization of the controller \eqref{feedcont}, \eqref{bcfeedcont}, \eqref{nlcontlaw}
 about the steady state is 
 \begin{equation} \label{contdev}
\begin{split}
&\hat h_t + \hat q_x = 0, \\[0.5em]
&\hat q_t + a(x) \hat h_x + b(x) \hat q_x + \big(a_x(x) + \tilde a(x)\big)\hat h + \big(b_x(x) + \tilde b(x)\big)\hat q = 0,
\end{split} 
\end{equation}
with the boundary conditions
\begin{equation} \label{bccontdev}
\begin{split}
	&\alpha_{h} \hat h(t,0) + \alpha_{q} \hat q(t,0) = d(t), \\[0.5em]
	&\hat h(t,L) = 0.
\end{split}
\end{equation}
Let us now introduce the following notations for the deviations between the states of the physical system and the controller:
\begin{equation} \begin{split}
&\tilde h(t,x) = H(t,x) - \widehat H(t,x) = h(t,x) - \hat h(t,x), \\[0.5em] &\tilde q(t,x) = Q(t,x) - \widehat Q(t,x) = q(t,x) - \hat q(t,x).	
\end{split} \end{equation}
Then, from \eqref{systdev}, \eqref{bcsystdev} we have the following linear `error' system
\begin{equation} \begin{split} \label{perturblindev}
&\tilde h_t + \tilde q_x = 0, \\[0.5em]
&\tilde q_t + a(x) \tilde h_x + b(x) \tilde q_x + \big(a_x(x) + \tilde a(x)\big)\tilde h + \big(b_x(x) + \tilde b(x)\big)\tilde q = 0, 
\end{split} \end{equation}
with the boundary conditions
\begin{equation} \label{bcperturblindev}\begin{split}
	& \alpha_{h} \tilde h(t,0) + \alpha_{q} \tilde q(t,0) = 0, \\[0.5em]
	& \beta_{h} \tilde h(t,L) + \beta_{q} \tilde q(t,L) = 0.
\end{split}\end{equation}
This error system has clearly a unique uniform steady-state $\tilde h(t,x) \equiv 0$, $\tilde q(t,x) \equiv 0$.
With the definitions
\begin{equation} \begin{split}
&\tilde z = \bpm \tilde h \\[0.5em] \tilde q \epm, \;\; A(x) = \bpm 0 & 1 \\[0.5em] a(x) & b(x) \epm, \\[0.5em]  &B(x) = \bpm 0 & 0 \\[0.5em] a_x(x) + \tilde a(x) & b_x(x) + \tilde b(x) \epm, \end{split}
\end{equation}
the system \eqref{perturblindev} is rewritten in matrix form:
\begin{gather} \label{perturblindevmat}
\tilde z_t + A(x) \tilde z_x +	B(x) \tilde z = 0.
\end{gather}
Since the system is supposed to be hyperbolic,
it is assumed that the matrix $A(x)$ has two real distinct eigenvalues
\begin{equation}
\lambda_1(x) = \dfrac{b(x) + \sqrt{b^2(x) + 4a(x)}}{2} \;\; \text{and} \;\; -\lambda_2(x) = \dfrac{b(x) - \sqrt{b^2(x) + 4a(x)}}{2},
\end{equation}
with
\begin{equation}
	b^2(x) + 4a(x) > 0 \;\; \text{for all } x \in [0,L].
\end{equation}
Remark that
\begin{gather}
\lambda_1(x) > 0, \;\; \lambda_2(x) > 0, \\[0.5em]
a(x) = \lambda_1(x)\lambda_2(x), \;\; b(x) = \lambda_1(x) - \lambda_2(x), \\[0.5em] b^2(x) + 4a(x) = \big(\lambda_1(x) + \lambda_2(x)\big)^2.
\end{gather}
Therefore, for all $x \in [0,L]$, the matrix $A(x)$ can be diagonalized with the invertible matrix $N(x)$ defined as
\begin{equation}
	N(x) = \bpm \lambda_2(x) & 1 \\[0.5em] - \lambda_1(x) & 1 \epm
\end{equation}
such that
\begin{equation} \label{N(x)}
	 N(x)A(x) = \Lambda(x) N(x) \hd \text{with} \hd \Lambda(x) = \bpm \lambda_1(x) & 0 \\[0.5em] 0 & -\lambda_2(x) \epm.
\end{equation}
In order to address the stability of the linear system \eqref{perturblindev}, \eqref{bcperturblindev}, we introduce the following basic quadratic Lyapunov function candidate:
\begin{equation} \label{basicLyap}
\mb{V} = \int_0^L \big(\tilde z^T P(x) \tilde z\big) dx
\end{equation}
with $P(x)$ a symmetric positive definite matrix of the form
\begin{equation} \label{P(x)}
P(x) = N^T(x) \Delta(x) N(x)	, \;\; \Delta(x) =  \bpm p_1(x) & 0 \\ 0 & p_2(x) \epm
\end{equation}
where $p_i : [0,L] \rightarrow \real_+$ $(i = 1,2)$ are two real positive functions to be determined.

We compute the time derivative of $\mb{V}$ along the $C^1$-solutions of the system \eqref{perturblindev}, \eqref{bcperturblindev}:
\begin{equation} \begin{split} \label{dVdt}
\dfrac{d\mb{V}}{dt} &= \int_0^L  \big(\tilde z^T P(x) \tilde z_t + \tilde z_t^T P(x) \tilde z\big) dx  \\[0.5em]
&= - \int_0^L \Big(\tilde z^T P(x) \big(A(x)\tilde z_x + B(x)\tilde z\big) + \big(\tilde z_x^T A^T(x) + \tilde z^T B^T(x)\big) P(x) \tilde z \Big) dx. 
\end{split} \end{equation}
Using \eqref{N(x)} and \eqref{P(x)}, we see that the matrix $M(x) = P(x)A(x)$ is symmetric:
\begin{equation} \label{sym}
M(x) = P(x)A(x) = A^T(x)P(x) = N^T(x) \Delta(x) \Lambda(x) N(x).
\end{equation}
Then, from \eqref{dVdt}, \eqref{sym} and using integration by parts, we have
\begin{equation} \label{dVdt2}
\begin{split}
\dfrac{d\mb{V}}{dt} &=  \tilde z^T(t,0) M(0) \tilde z(t,0) -  \tilde z^T(t,L) M(L) \tilde z(t,L) \\[0.5em]
&- 	\int_0^L \tilde z^T \Big( -M_x(x) + B^T(x)P(x) + P(x) B(x) \Big) \tilde z \, dx.
\end{split}
\end{equation}
Under the boundary conditions \eqref{bcperturblindev}, it can be checked that $\tilde z^T(t,0) M(0) \tilde z(t,0) < 0$ if
\begin{enumerate}
\item [(a1)] $\hspace{1.5cm} \left( \dfrac{\alpha_h - \alpha_q \lambda_2(0)}{\alpha_h + \alpha_q \lambda_1(0)} \right)^{\! 2} < \dfrac{p_2(0)}{p_1(0)} \dfrac{\lambda_2(0)}{\lambda_1(0)}$,
\end{enumerate} 
and that $- \tilde z^T(t,L) M(L) \tilde z(t,L) < 0$ if
\begin{enumerate}
\item [(a2)] $\hspace{1.5cm} \left( \dfrac{\beta_h + \beta_q \lambda_1(L)}{\beta_h - \beta_q \lambda_2(L)} \right)^{\! 2} < \dfrac{p_1(L)}{p_2(L)}  \dfrac{\lambda_1(L)}{\lambda_2(L)}  $.
\end{enumerate}
Hence, from \eqref{dVdt2}, it follows that if the two positive functions $p_i \in C^1([0,L], (0, +\infty))$ $(i = 1,2)$ can be selected such that conditions (a1) and (a2) are satisfied and 
\begin{enumerate}
\item[(b)] the matrix $-M_x(x) + B^T(x)P(x) + P(x) B(x)$ is positive definite for all $x \in [0,L]$,
\end{enumerate}
then $d\mb{V}/dt$ is a negative definite function along the solutions of the system \eqref{perturblindev}, \eqref{bcperturblindev}, which induces the following stability property because $\mb{V}$ is equivalent to a $L^2$ norm for $\tilde z(t,.) \in L^2([0,L], \real^2)$.
\begin{theorem} \label{theorem2}
If there exist two functions $p_i \in C^1([0,L], (0, +\infty))$ $(i = 1,2)$ such that conditions (a1), (a2) and (b) are satisfied, then the system \eqref{perturblindev}, \eqref{bcperturblindev} is $L^2$-exponentially stable, that is there exist two positive constants $C$ and $\nu$ such that, from any initial condition $\tilde z(0,.) \in L^2([0,L], \real^2)$, the system solution satisfies the inequality
\begin{equation}
	\|\tilde z(t,.) \|_{L^2} \ls Ce^{-\nu t}\|\tilde z(0,.)\|_{L^2}, \hd t \in [0,+\infty).
\end{equation}\qed
\end{theorem}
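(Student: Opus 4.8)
The plan is to turn the identity \eqref{dVdt2} for $d\mb{V}/dt$, which is already established along $C^1$-solutions, into a differential inequality of the form $d\mb{V}/dt \ls -\mu\, \mb{V}$ and then integrate it. First I would record that $\mb{V}$ in \eqref{basicLyap} is equivalent to the squared $L^2$-norm: since $P(x) = N^T(x)\Delta(x)N(x)$ is symmetric, continuous and positive definite on the compact interval $[0,L]$, its eigenvalues are bounded above and below by strictly positive constants, so there exist $c_1, c_2 > 0$ with $c_1 \|\tilde z\|_{L^2}^2 \ls \mb{V} \ls c_2 \|\tilde z\|_{L^2}^2$.

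Next I would bound the three terms on the right-hand side of \eqref{dVdt2}. By construction, condition (a1) forces the boundary quadratic form $\tilde z^T(t,0)M(0)\tilde z(t,0)$ to be $\ls 0$ on the one-dimensional subspace cut out by the first relation in \eqref{bcperturblindev}, and condition (a2) likewise forces $-\tilde z^T(t,L)M(L)\tilde z(t,L) \ls 0$; hence both boundary contributions are non-positive for every admissible trace. For the interior term, condition (b) states that $-M_x(x)+B^T(x)P(x)+P(x)B(x)$ is positive definite at each $x$; since this matrix depends continuously on $x$ over the compact interval $[0,L]$, its smallest eigenvalue is bounded below by some $\eta > 0$, so the integrand dominates $\eta\,\abs{\tilde z}^2$ uniformly. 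Combining these three bounds gives $d\mb{V}/dt \ls -\eta\|\tilde z\|_{L^2}^2 \ls -(\eta/c_2)\,\mb{V}$. Grönwall's inequality then yields $\mb{V}(t) \ls e^{-(\eta/c_2)t}\mb{V}(0)$, and feeding this through the norm equivalence produces $\|\tilde z(t,\cdot)\|_{L^2} \ls \sqrt{c_2/c_1}\; e^{-(\eta/2c_2)t}\|\tilde z(0,\cdot)\|_{L^2}$, i.e.\ the claimed estimate with $C=\sqrt{c_2/c_1}$ and $\nu = \eta/(2c_2)$.

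The main obstacle is that the identity \eqref{dVdt2}, obtained by integration by parts, and the differentiation of $\mb{V}$ are only justified for sufficiently regular (say $C^1$) solutions, whereas the theorem asserts decay from arbitrary $\tilde z(0,\cdot) \in L^2$. The hard part will therefore be the density argument: one must invoke well-posedness of \eqref{perturblindev}, \eqref{bcperturblindev} as a $C^0$-semigroup on $L^2([0,L],\real^2)$, establish the decay estimate first for a dense class of smooth initial data with the uniform constants $C$ and $\nu$ found above, and then extend it to all of $L^2$ by continuity of the solution operator. I would also double-check the sign bookkeeping in conditions (a1)--(a2) against the explicit form of $M(0)$ and $M(L)$ from \eqref{sym}, since the strict inequalities there guarantee the boundary forms are non-positive on the constraint subspaces, which is all that is needed to combine with the strictly coercive interior term.
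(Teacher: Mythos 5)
Your proposal is correct and follows essentially the same route as the paper: the paper's proof consists precisely of the computation \eqref{dVdt}--\eqref{dVdt2}, the observation that (a1) and (a2) make the boundary terms non-positive and (b) makes the interior term coercive, and the resulting differential inequality for $\mb{V}$ combined with the equivalence of $\mb{V}$ to the squared $L^2$-norm. Your added remarks on the uniform eigenvalue bounds by compactness and on the density argument needed to pass from $C^1$-solutions to arbitrary $L^2$ initial data fill in details the paper leaves implicit, but do not change the argument.
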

This theorem tells us that the solution of the (linearized) physical system asymptotically tracks the solution of the (linearized) controller system, independently of the disturbance. In particular the theorem implies that the output $H(t,L)$ asymptotically converges to the set-point:
\begin{equation}
\lim_{t \rightarrow \infty} H(t,L) = H^*_L	
\end{equation}
whatever the size and the shape of the disturbance.
This can be viewed as a generalization of the condition \eqref{condition1} which was obtained in the simple linear case addressed in Section \ref{sectionlinear}. However, this is not sufficient to conclude that the feedforward control objective is achieved because we are not yet guaranteed that all the initial transients of the overall system will exponentially vanish under the conditions stated in Theorem \ref{theorem2}. Indeed, we have to verify in addition that the solutions of the controller itself are not unstable.

For that purpose, let us thus consider the linearized controller system \eqref{contdev} with the boundary conditions \eqref{bccontdev}. It can be observed that the system dynamics are very similar, but not equal, to those of the error system.  The only difference lies in the boundary conditions. We can therefore use the same Lyapunov function candidate
\begin{equation} \label{contLyap}
\mb{\hat V} = \int_0^L \big(\hat z^T P(x) \hat z\big) dx \hh \Big(\text{where} \;\; \hat z := (\hat h, \hat q )^T \; \Big)
\end{equation}
for which we have
\begin{equation} \label{dVdt3}
\begin{split}
\dfrac{d\mb{\hat V}}{dt} &= \hat z^T(t,0) M(0) \hat z(t,0) -  \hat z^T(t,L) M(L) \hat z(t,L)  \\[0.5em]
&- 	\int_0^L \hat z^T \Big( -M_x(x) + B^T(x)P(x) + P(x) B(x) \Big) \hat z \, dx.
\end{split}
\end{equation}
Under the boundary condition \eqref{bccontdev}, we have
\begin{equation} \label{bc31}
	-  \hat z^T(t,L) M(L) \hat z(t,L) = -\big(p_1(L)\lambda_1(L) - p_2(L)\lambda_2(L)\big)\hat q^2(t,L)
\end{equation}
which is negative if and only if
\begin{itemize}
\item[(a3)] \hspace{1.5cm} $\dfrac{p_2(L)\lambda_2(L)}{p_1(L)\lambda_1(L)} < 1$.	
\end{itemize}
Remark that this condition can be viewed as a generalization of condition \eqref{condition2} which was obtained from a frequency domain approach for the simple linear example of Section \ref{sectionlinear}.

Let us now assume that $\alpha_q \neq 0$ (the case $\alpha_q = 0$ will be considered next). Then, under the boundary condition \eqref{bccontdev}, we have
\begin{equation} \label{bc32}
	\hat z^T(t,0) M(0) \hat z(t,0) = - \gamma_0 \hat h^2(t,0) + \gamma_1 \hat h(t,0) d(t) + \gamma_2 d^2(t)
\end{equation}
with
\begin{gather}
\gamma_0 = - p_1(0) \lambda_1(0)\left(\lambda_2(0) - \dfrac{\alpha_h}{\alpha_q}\right)^{\!\! 2}	 + p_2(0) \lambda_2(0) \left(\lambda_1(0) + \dfrac{\alpha_h}{\alpha_q}\right)^{\!\! 2}, \\[0.5em]
\gamma_1 = \dfrac{2}{\alpha_q} \Big[ p_1(0) \lambda_1(0)\left(\lambda_2(0) - \dfrac{\alpha_h}{\alpha_q}\right) + p_2(0) \lambda_2(0) \left(\lambda_1(0) + \dfrac{\alpha_h}{\alpha_q}\right) \Big], \\[0.5em]
\gamma_2 = \dfrac{1}{\alpha^2_q}\big(p_1(0)\lambda_1(0) - p_2(0)\lambda_2(0)\big).
\end{gather}
In the case where $\alpha_q = 0$ and necessarily $\alpha_h \neq 0$\footnote{The case where both $\alpha_h$ and $\alpha_q$ would equal zero is pointless in the context of this paper because it would correspond to a system without disturbance.}, we have
\begin{equation}
	\hat z^T(t,0) M(0) \hat z(t,0) = - \gamma_0 \hat q^2(t,0) + \gamma_1 \hat q(t,0) d(t) + \gamma_2 d^2(t)
\end{equation}
with
\begin{gather}
\gamma_0 = - p_1(0) \lambda_1(0) + p_2(0) \lambda_2(0), \\[0.5em]
\gamma_1 = \dfrac{2 \lambda_1(0) \lambda_2(0)}{\alpha_h} \big( p_1(0)  + p_2(0)  \big), \\[0.5em]
\gamma_2 = \dfrac{\lambda_1(0) \lambda_2(0)}{\alpha^2_h}\big(p_1(0)\lambda_2(0) - p_2(0)\lambda_1(0)\big).
\end{gather}
In both cases, it can be verified that $\gamma_0 > 0$ if and only if condition (a1) is verified. 
From, \eqref{dVdt3}, \eqref{bc31}, \eqref{bc32}, if conditions (a1), (a3) and (b) are satisfied, we can write
\begin{equation}
	\dfrac{d\mb{\hat V}}{dt} \ls - \mu_0 \int_0^L \big(\hat z^T \hat z\big) dx - \gamma_0 \hat q^2(t,0) + |\gamma_1| |\hat q(t,0)| |d(t)| + |\gamma_2| d^2(t).
\end{equation}
where $\mu_0 > 0$ is the infimum over $[0,L]$ of the eigenvalues of the positive definite matrix $-M_x(x) + B^T(x)P(x) + P(x) B(x)$.

Let us now remark that
\begin{equation}
	|\gamma_1||\hat q(t,0)| |d(t)| \ls \dfrac{\gamma_0}{2}\hat q^2(t,0) + \dfrac{\gamma_1^2}{2\gamma_0} d^2(t). 
\end{equation}
Therefore:
\begin{align}
	\dfrac{d\mb{\hat V}}{dt} &\ls - \mu_0 \int_0^L \big(\hat z^T \hat z\big) dx - \dfrac{\gamma_0}{2} \hat q^2(t,0) + \left(\dfrac{\gamma_1^2}{2\gamma_0} + |\gamma_2|\right) d^2(t) \\[0.5em]
	&\ls - \dfrac{\mu_0}{\mu_1} \mb{\hat V} + \left(\dfrac{\gamma_1^2}{2\gamma_0} + |\gamma_2|\right) d^2(t) \label{auxi}
\end{align}
where $\mu_1$ is the infimum over $[0,L]$ of the eigenvalues of the positive definite matrix $P(x)$.

Since $\mb{\hat V}$ is equivalent to the square of the $L^2$ norm for $\hat z(t,.) \in L^2([0,L], \real^2)$, if the disturbance input is bounded, this induces the input-to-state stability property stated in the following proposition.
\begin{theorem} \label{theorem3}
	If there exist two functions $p_i \in C^1([0,L], (0, +\infty))$ $(i = 1,2)$ such that conditions (a1), (a3) and (b) are satisfied, then the system \eqref{perturblindev}, \eqref{bcperturblindev} is $L^2$-input-to-state stable, that is there exist three positive constants $C_1$, $C_2$ and $\nu$ such that, from any initial condition $\hat z(0,.) \in L^2([0,L], \real^2)$, the system solution satisfies the inequality
\begin{equation}
\lVert \hat z(t,\cdot)\rVert_{L^{2}} \ls C_{1}\lVert \hat z(0,.) \rVert_{L^{2}} e^{-\nu t}+ C_{2} \; \sup _{t \gs 0}|d(t)|, \hd t \in [0,+\infty).
\end{equation}
\qed
\end{theorem}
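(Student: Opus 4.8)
The plan is to derive the stated input-to-state bound directly from the differential inequality \eqref{auxi}, into which conditions (a1), (a3) and (b) have already been absorbed; the estimate concerns the controller system \eqref{contdev}, \eqref{bccontdev}, whose state $\hat z$ and driving disturbance $d$ are the quantities appearing in the claim.

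First I would abbreviate $\kappa \teq \mu_0/\mu_1 > 0$ and $c \teq \gamma_1^2/(2\gamma_0) + \abs{\gamma_2} \gs 0$, so that \eqref{auxi} becomes $d\mb{\hat V}/dt \ls -\kappa \mb{\hat V} + c\, d^2(t)$. Integrating this scalar linear differential inequality by Grönwall's lemma along the solution gives
\begin{equation}
\mb{\hat V}(t) \ls e^{-\kappa t}\mb{\hat V}(0) + c \int_0^t e^{-\kappa(t-s)} d^2(s)\, ds \ls e^{-\kappa t}\mb{\hat V}(0) + \frac{c}{\kappa}\,\sup_{s \gs 0} \abs{d(s)}^2,
\end{equation}
the last step using $\int_0^t e^{-\kappa(t-s)}\,ds \ls 1/\kappa$.

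Next, because $P(x)$ is continuous and positive definite on the compact interval $[0,L]$, its eigenvalues lie between two positive constants; writing $\mu_1$ for the infimum already introduced and $\mu_2$ for the corresponding supremum, one has $\mu_1 \|\hat z(t,\cdot)\|_{L^2}^2 \ls \mb{\hat V}(t) \ls \mu_2 \|\hat z(t,\cdot)\|_{L^2}^2$. Inserting these two-sided bounds, dividing by $\mu_1$, and taking square roots with the elementary inequality $\sqrt{u+v} \ls \sqrt{u} + \sqrt{v}$ yields exactly the asserted estimate, with $\nu = \kappa/2$, $C_1 = \sqrt{\mu_2/\mu_1}$ and $C_2 = \sqrt{c/(\kappa\mu_1)}$.

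The one step requiring care — and the main obstacle — is that \eqref{auxi} was obtained by differentiating $\mb{\hat V}$ along $C^1$-solutions, whereas the theorem allows any initial datum $\hat z(0,\cdot) \in L^2([0,L],\real^2)$. I would therefore first prove the bound for smooth solutions and then extend it to general $L^2$ data by density, invoking the continuous dependence of the solution on the initial condition in the $L^2$ topology guaranteed by the well-posedness assumption of Section 4. This regularization argument, standard in the Lyapunov stability analysis of such hyperbolic systems, is the only ingredient beyond the routine Grönwall estimate and the norm equivalence.
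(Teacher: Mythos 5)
Your proposal is correct and follows essentially the same route as the paper: the paper establishes the differential inequality \eqref{auxi} and then simply asserts the ISS estimate from the equivalence of $\mb{\hat V}$ with the squared $L^2$ norm, which is exactly the Gr\"onwall integration and two-sided eigenvalue bound you spell out (your explicit constants $\nu=\mu_0/(2\mu_1)$, $C_1=\sqrt{\mu_2/\mu_1}$, $C_2=\sqrt{c\mu_1^{-1}\mu_0^{-1}\mu_1}$ are the natural ones). You also correctly read the statement as concerning the controller system \eqref{contdev}, \eqref{bccontdev} (the citation of \eqref{perturblindev}, \eqref{bcperturblindev} in the theorem is evidently a slip, since the estimate involves $\hat z$ and $d$), and your density remark addresses the passage from $C^1$ to $L^2$ data that the paper leaves implicit.
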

Hence, if the input disturbance $d(t)$ is bounded, we can conclude from Theorems \ref{theorem2} and \ref{theorem3} that, starting from any arbitrary initial condition in $L^2$,  the states of the (linearized) physical system and the (linearized) controller are bounded in $L^2$ and that $H(t,L)$ asymptotically converges to the set-point
\begin{equation}
\lim_{t \rightarrow \infty} H(t,L) = H^*_L,	
\end{equation}
such that the feedforward control objective is achieved as soon as the initial transients have vanished.

As it is justified in many recent publications (see e.g. \cite[Theorem 6.6]{BasCor14} and \cite[Section 2.1]{BasCorHay20}), it is also worth noting that the conditions (a1), (a2), (a3) and (b) for the $L^2$-stability of the linearized system, may also be sufficient to establish the $H^2$-stability of the overall \textit{nonlinear} system in a neighbourhood of the steady-state. A rigorous detailed analysis of this generalization is however delicate and would go far beyond the scope of this article. We will limit ourselves here to a more pragmatic approach which consists in checking the applicability and the effectiveness of the method in the realistic nonlinear applications considered in the rest of the paper.

 To conclude this section, let us also mention that sufficient ISS conditions can also be established for the $C^1$-norm but the analysis is still more intricate. The interested reader is referred, among others, to the recent publications \cite[Section 9.4]{KarKrs19}, \cite{FerPri20}, \cite{YorBan20} and \cite{BasCorHay20}. However, it must be said that in many $2 \times 2$ physical control systems of practical interest, the ISS conditions are equivalent for the $H^2$ and $C^1$ norms. This will appear for instance in the next section where we apply our theory to the example of level control in an open channel.

\section{Application to level control in an open channel} \label{sectionchannel}

In the field of hydraulics, the flow in open channels is generally represented by the Saint-Venant equations which are a typical example of a $2\times2$ nonlinear hyperbolic system.

We consider the special case of a pool of an open channel  as represented in Figure \ref{bief}. We assume that the channel is horizontal
and prismatic with a constant rectangular section and a unit width.
\begin{figure}[h]
\begin{center}
\includegraphics[width=12cm]{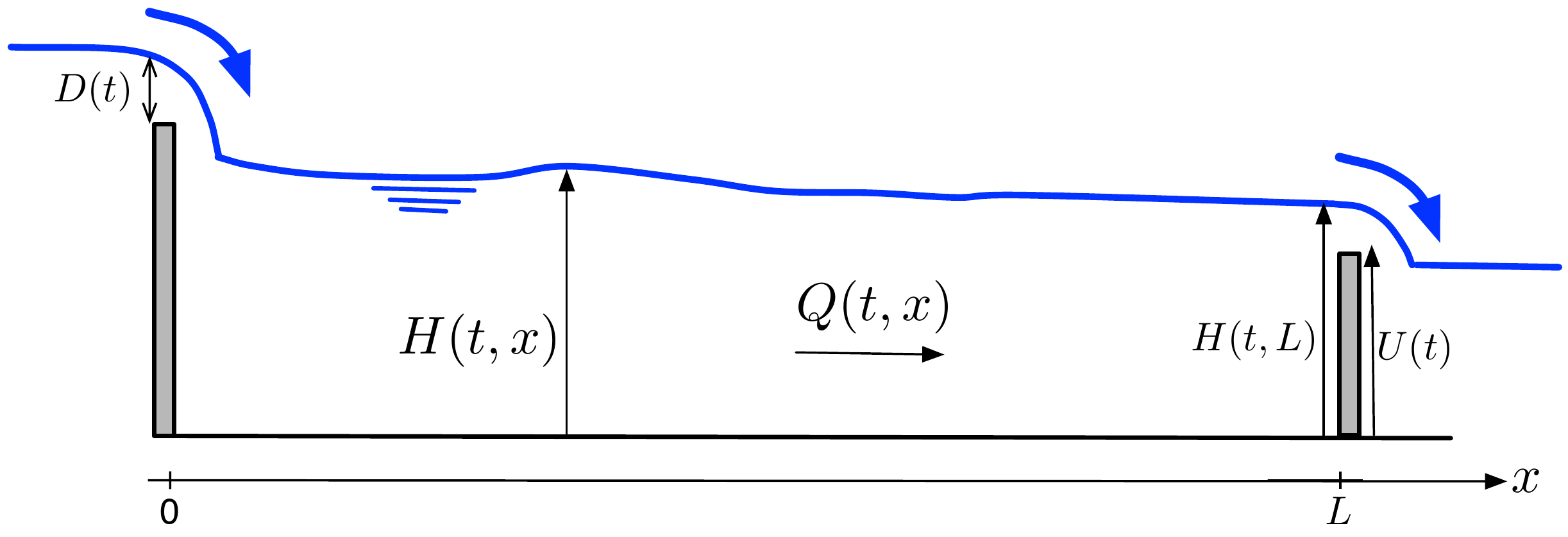}
\caption{A pool of an open channel with overshot gates at the upstream and downstream sides.}
\label{bief}
\end{center}
\end{figure}

The flow dynamics are described by the Saint-Venant equations
\begin{equation} \label{sv}
\begin{matrix}
H_t + Q_x = 0,  \\[0.5em]
Q_t + \left(\dfrac {Q^2}{H} + \text{g} \dfrac{H^2}{2}\right)_{\! \displaystyle x}+ c_f \dfrac{Q^2}{H^2}= 0. 
\end{matrix}
\end{equation}
where $H(t,x)$ represents the water level and
$Q(t,x)$ the water flow rate  in the pool while g denotes the gravitation constant and $c_f$ is an adimensional friction coefficient. This system is in the form \eqref{mod1}, \eqref{mod2} with 
\begin{equation}
	f(H,Q) = \dfrac{Q^2}{H} + \text{g} \dfrac{H^2}{2} \; \text{ and } \; g(H,Q) = c_f \dfrac{Q^2}{H^2}.
\end{equation}
The system is subject to the following boundary conditions:
\begin{equation} \label{bcsv}
\begin{split}
	&Q(t,0) = c_g\sqrt{\big[D(t)\big]^3}, \\[0.5em]
	&Q(t,L) = c_g\sqrt{\big[H(t,L)-U(t)\big]^3}.
\end{split}
\end{equation}
These boundary conditions are given by standard hydraulic models of overshot gates (see Fig.\ref{bief}). The first boundary condition imposes the value of the canal inflow rate $Q(t,0)$ as a function of the water head above the gate $D(t)$ which is the measurable input disturbance. The second boundary condition corresponds to the control overshot gate at the downstream side of the canal. The control action is the vertical elevation $U(t)$ of the gate. In both models, $c_g$ is a constant discharge coefficient. Let us also remark that these boundary conditions are written in the form \er{physystBC} as follows:
\begin{equation} \label{bcab}
\begin{matrix} 
\alpha(H(t,0), Q(t,0)) = \big(c_g^{-1} Q(t,0) \big)^{\! 2/3} = D(t),  \\[0.5em]
\beta( H(t,L), Q(t,L)) = H(t,L) - \big(c_g^{-1} Q(t,L) \big)^{\! 2/3} = U(t).  
\end{matrix}
\end{equation}

For a constant gate position $U(t) = U^* > 0$ $\forall t$ and a constant inflow rate $Q(t,0) = Q^*>0$ $\forall t$, a steady state is a time-invariant solution $H^*(x),  Q^*$ given by:
\begin{subequations} \label{svss}
\begin{align}
&H(t,x) = H^*(x) \; \text{ and } \; Q(t,x) = Q^* \hd \forall t, \;\; x\in[0,L], \\[0.5em] 
&H^*(L) = U^* + (c_g^{-1} Q^* )^{2/3}, \\[0.5em]
&H^*(x) \text{ solution of } (\text{g}H^{*3}(x) - Q^{*2})H^*_x(x) + c_f Q^{*2} = 0\label{svssc}
\end{align}
\end{subequations}
The existence of a solution to \eqref{svssc} requires that $\text{g}H^{*3}(L) \neq Q^{*2}$. If $\text{g}H^{*3}(L) > Q^{*2}$, then \eqref{svssc} has a solution (note that $H^*(x)$ is then a decreasing function of $x$ over $[0,L]$) and the steady state flow is subcritical (or fluvial). In such case, according to the physical evidence, $H^*(x)$ is positive :
\begin{equation}
H^*(x) > 0 \;\; \text{for all } x \in [0,L],
\end{equation}
and satisfies the following inequality:
\begin{equation} \label{subcritical}
0 < \text{g}H^{*3}(x) - Q^{*2}, \;\;\; \forall x \in [0,L].
\end{equation}
In the case where $\text{g}H^{*3}(L) < Q^{*2}$, the steady state, if it exists, is said to be supercritical (or torrential). We do not consider that case in the present article.

The control objective is to regulate the level $H(t,L)$  at the set point $H^*_L$, by acting on the gate position $U(t)$. More precisely, it is requested to adjust the control $U(t)$ in order to have $H(t,L) = H^*_L$ $\forall t$ in spite of the variations of the disturbing inflow rate measured by the signal
$D(t)$.

To solve this control problem, the design of linear feedforward controllers based on simplified linear models of open channels with uniform steady states, as we have introduced in Section \ref{sectionlinear}, was addressed previously in \cite{BasCordAn05},  \cite[Section 9]{LitFro09c} and \cite{LitFroSco07}. In this article, we extend these results to the general case of open channels with non linear Saint-Venant dynamics and non uniform steady-states. 

On the basis of our previous discussions, the feedforward control law is defined as follows:
\begin{equation} \label{nlcontlaw2}
U(t) = H^*_L - (c_g^{-1} \widehat Q(t,L))^{2/3}	
\end{equation}
where $\widehat Q(t,L)$ is computed with the auxiliary system dynamics as in \eqref{feedcont}, \eqref{bcfeedcont}:
\begin{equation} \label{feedcont2}
\begin{matrix} 
\widehat H_t + \widehat Q_x = 0,  \\[0.5em]
\widehat Q_t + \left(\dfrac{\widehat Q^2}{\widehat H} + \text{g} \dfrac{\widehat H^2}{2}\right)_{\!\! \displaystyle x} + c_f \dfrac{\widehat Q^2}{\widehat H^2} = 0, 
\end{matrix}
\end{equation}
\begin{equation} \label{bcfeedcont2}
\begin{matrix} 
\widehat Q(t,0) = c_g\sqrt{\big[D(t)\big]^3}, \\[0.5em]
\widehat H(t,L) = H^*_L,	
\end{matrix}
\end{equation}
To simplify the notations, we define the steady state water velocity 
\begin{equation} 
	V^*(x) = \dfrac{Q^*}{H^*(x)} > 0 \;\; \forall x \in [0,L].
\end{equation}
With this notation, the subcritical condition \eqref{subcritical} is written:
\begin{equation} \label{subcritical2}
	\text{g}H^*(x) - V^{*2}(x) > 0 \;\;\; \forall x \in[0, L].
\end{equation}
Now, from the linearization of the control system \eqref{sv}, \eqref{bcsv},  \eqref{nlcontlaw2}, \eqref{feedcont2}, \eqref{bcfeedcont2}, we have, in this application, the following matrices $A(x)$ and $B(x)$: \begin{equation}
A(x) = \bpm 	0 & 1 \\[0.5em] \text{g}H^*(x) - V^{*2}(x) & 2 V^*(x) \epm,
\end{equation}
\begin{equation}
B(x) = \bpm 	0 & & 0 \\[0.5em] -3 \dfrac{\text{g}H^*}{V^*}V^*_x(x) & &  2\dfrac{\text{g}H^*}{V^{*2}}V^*_x(x) \epm.
\end{equation}
The eigenvalues of the matrix $A(x)$ are
\begin{equation}
\lambda_1(x) = V^* + \sqrt{gH^*(x)} \hd \text{and} \hd - \lambda_2(x) = V^* - \sqrt{gH^*(x)}.
\end{equation}
Using these eigenvalues in the matrix $N(x)$ defined in \eqref{N(x)}, the next step is to select the functions $p_1(x)$ and $p_2(x)$ of the matrix $P(x)$ defined in \eqref{P(x)} to build the Lyapunov function candidate \eqref{basicLyap}.

In this application we shall see that it is sufficient to take $p_1 = p_2$ = constant. With $p_1 = p_2 = \demi$, the matrix $P(x)$ is 
\begin{equation}
P(x) = \bpm 	\text{g}H^*(x) + V^{*2}(x) & & - V^*(x) \\[0.5em] -V^*(x) & & 1 \epm.
\end{equation}
It is readily checked that, for all $x \in [0,L]$, this  matrix is positive definite (since $\det P(x) = \text{g}H^*(x)$). It follows that\footnote{From now on, when it does not lead to confusion, we often drop the argument $x$ to simplify the notations.}
\begin{equation} \label{matrixM}
M(x) = P(x) A(x) = 	\bpm -(\text{g}H^* - V^{*2})V^* & & \text{g}H^* - V^{*2} \\[0.5em] \text{g}H^* - V^{*2} & & V^* \epm.
\end{equation}
Moreover, we have
\begin{equation}
	P(x)B(x) + B^T(x)P(x) = \bpm 6 gH^* V^*_x & & -5 \dfrac{gH^*}{V^*}V^*_x \\[1em] -5 \dfrac{gH^*}{V^*}V^*_x & & 4 \dfrac{gH^*}{V^{*2}}V^*_x \epm,
\end{equation}
while the matrix $-M_x(x)$ is as follows:
\begin{equation}
	-M_x(x) = \bpm - 3V^{*2}V^*_x & & \dfrac{\text{g}H^* + 2 V^{*2}}{V^*} V^*_x\\[1em] \dfrac{\text{g}H^* + 2 V^{*2}}{V^*} V^*_x  & & - V^*_x \epm.
\end{equation}
Then we have
\begin{multline} \label{matrixMPB}
	-M_x(x) + P(x)B(x) + B^T(x)P(x) = \\[1em] \bpm (6\text{g}H^* - 3 V^{*2})V^*_x & & \dfrac{-4\text{g}H^* +2 V^{*2}}{V^*}V^*_x \\[1em] \dfrac{-4\text{g}H^* +2 V^{*2}}{V^*}V^*_x & & \left(\dfrac{4\text{g}H^* - V^{*2}}{V^{*2}} \right)V^*_x \epm\!.
\end{multline}
Under the subcritical condition \eqref{subcritical2}, this matrix is positive definite for all $x \in [0,L]$ because $V^*_x > 0$ and the determinant is
\begin{equation}
	\dfrac{V^*_x}{V^{*2}}\Big[4(\text{g}H^*)^2 + V^{*2}(2\text{g}H^* - V^{*2}) + 4\text{g}H^*(\text{g}H^* - V^{*2})\Big] > 0.
\end{equation}
Therefore the stability condition (b) is satisfied. Let us now address the boundary stability conditions relative to the boundaries. For that purpose, from \eqref{bcsv}, \eqref{bcab}, \eqref{bcfeedcont2}, we derive the boundary conditions of the linear error system which are
\begin{equation} \label{bcsvlin}
\begin{split}
&\tilde q(t,0) = 0 \hd (\text{i.e.} \; \alpha_h = 0, \alpha_q \neq 0),\\[0.5em]
&\tilde q(t,L) = \beta_L \tilde h(t,L) \; \text{ with } \; \beta_L = \frac{3}{2} \Big(c_g^2 Q^* \Big)^{\!\! 1/3} \hd (\text{i.e.} \; \beta_q \neq 0, \beta_h = - \beta_L \beta_q).\\[0.5em]
\end{split}
\end{equation}
We can check that the stability conditions (a1), (a2) and (a3) are verified. Indeed we have:
\begin{align*}
	\text{(a1)} \hd &\Longleftrightarrow \hd  \dfrac{\lambda_2(0)}{\lambda_1(0)} = \dfrac{\sqrt{gH^*(0)} - V^*(0)}{\sqrt{gH^*(0)} + V^*(0)} < 1. \\[1em]
	\text{(a2)} \hd &\Longleftrightarrow \hd \left( \dfrac{\lambda_1(L) - \beta_L}{\lambda_2(L) + \beta_L}\right)^{\!\! 2} <  \dfrac{\lambda_1(L)}{\lambda_2(L)} \\[0.5em] &\Longleftrightarrow \hd \left(\dfrac{\sqrt{gH^*(L)} + V^*(L) - \beta_L}{\sqrt{gH^*(L)} - V^*(L) + \beta_L}\right)^{\!\! 2} < \dfrac{\sqrt{gH^*(L)} + V^*(L)}{\sqrt{gH^*(L)} - V^*(L)} \\[0.5em] &\Longleftrightarrow \hd - V^*(L) \beta^2_L - \big(gH^*(L) - V^{*2}(L)\big)(2 \beta_L - V^*(L)) > 0. \\[0.5em] &\text{This inequality is satisfied if} \;\; 2\beta_L  > V^*(L). \\[1em] \text{(a3)} \hd &\Longleftrightarrow \hd \dfrac{\lambda_2(L)}{\lambda_1(L)} = \dfrac{\sqrt{gH^*(L)} - V^*(L)}{\sqrt{gH^*(L)} + V^*(L)} < 1.
\end{align*}
Furthermore, in this special case of Saint-Venant equations, it is worth noting that conditions (a1), (a2), (a3) and (b) ensure exponential stability not only in $L^2$ but also in $C^0$ for the linearized system (and thus locally in $C^1$ for the nonlinear system). This property follows directly from Theorem 3.2 and Corollary 1 in \cite{Hay19} (see also \cite{Hay19b}). This means that, if the disturbance $d(t)$ is bounded, then all the internal signals of the control system are also guaranteed to be bounded.

In this example of an open channel, we thus see that the feedforward control can  completely remove the effect of the disturbance while maintaining the system stability. This feedforward control analysis can be extended to the case of a channel with a space varying slope by using the Lyapunov function proposed for instance in \cite{Hay19c} and \cite{HaySha19}. For completeness, let us mention that feedforward control may also be used for the tracking of a time varying reference signal, a topic which is treated in \cite{RabMegLit09} using a parabolic partial differential equation resulting from a simplification of the Saint-Venant equations.

\begin{figure}[h!]
\begin{center}
\includegraphics[width=10cm]{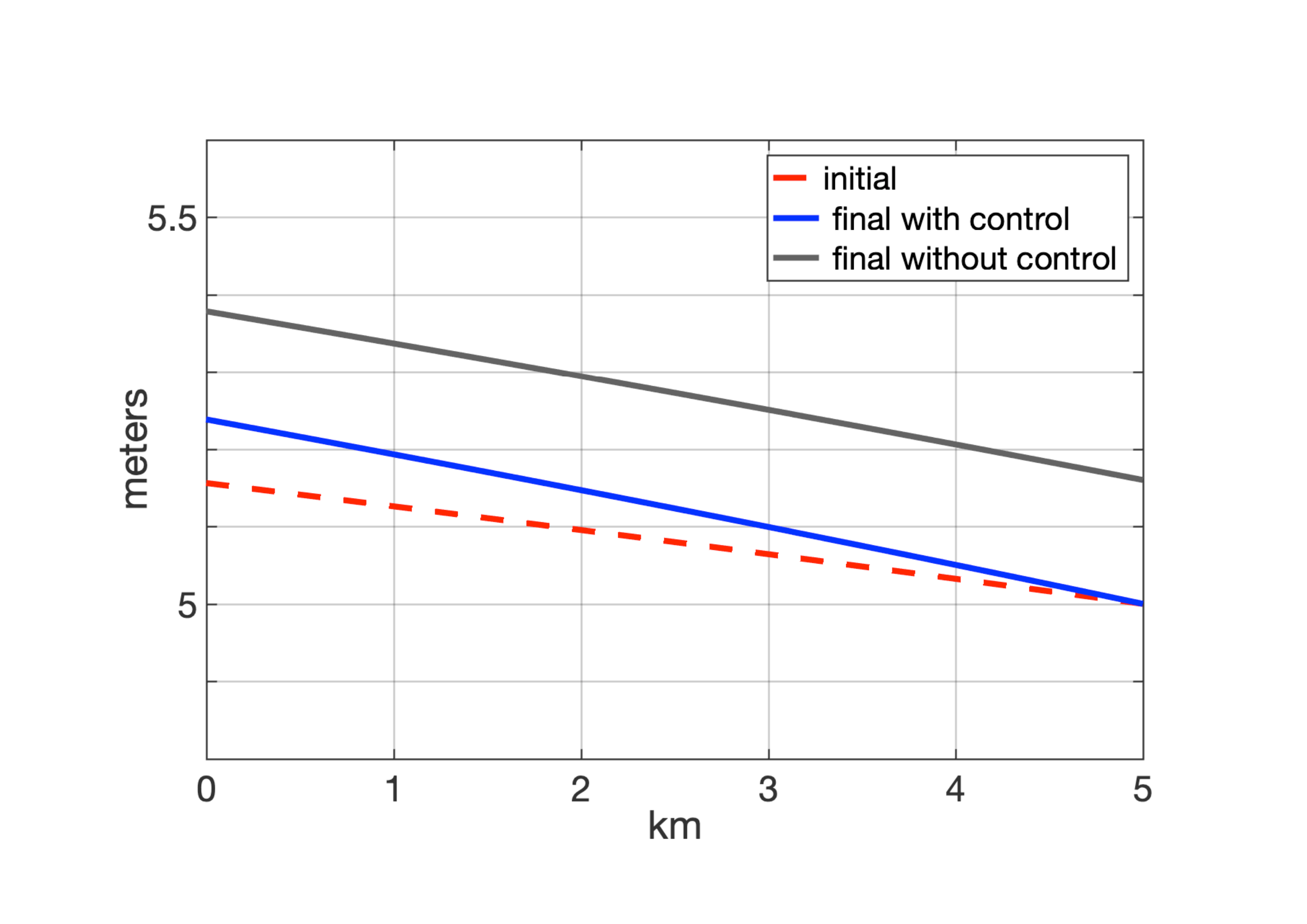}
\vspace{-0.6cm}
\caption{Steady state profiles of water level}
\label{profileH}
\end{center}
\end{figure}

Let us now illustrate this feedforward control design with a numerical simulation. The simulation is done with the `hpde' solver \cite{Sha05}. We consider a pool with the following parameters:
\begin{align*}
&\text{length:} \;\; L = 5000 \text{ (meters)}, \\
&\text{friction coefficient:} \;\; c_f = 0.01,	\\
&\text{discharge coefficient:} \;\; c_g = 2 \text{ m}^{4/3} \text{ s}^{-1}. 
\end{align*}

At the initial time ($t=0$), the system is at steady state with a constant flow rate per unit of width $Q^* = 2$ m$^3/$s and a boundary water level $H(0,L) = 5$ m. The initial steady state profile $H^*(x)$ of the water level is shown in Figure \ref{profileH} (dotted red curve).

\begin{figure}[h!]
\begin{center}
\includegraphics[width=10cm]{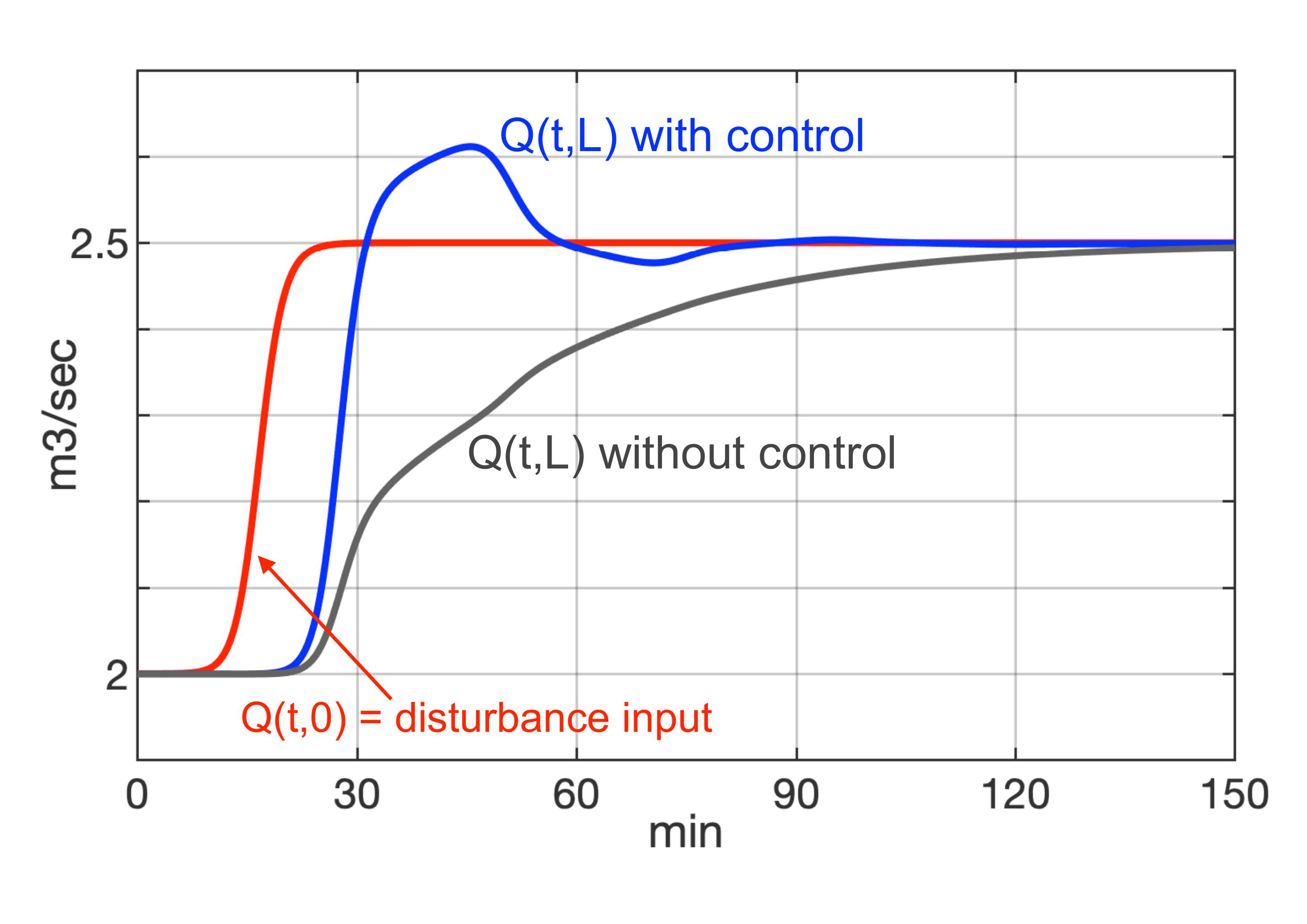}
\vspace{-0.6cm}
\caption{Input and output water flow rates per unit of width}
\label{qplot}
\end{center}
\end{figure}
\vspace{-1cm}
\begin{figure}[h!]
\begin{center}
\includegraphics[width=10cm]{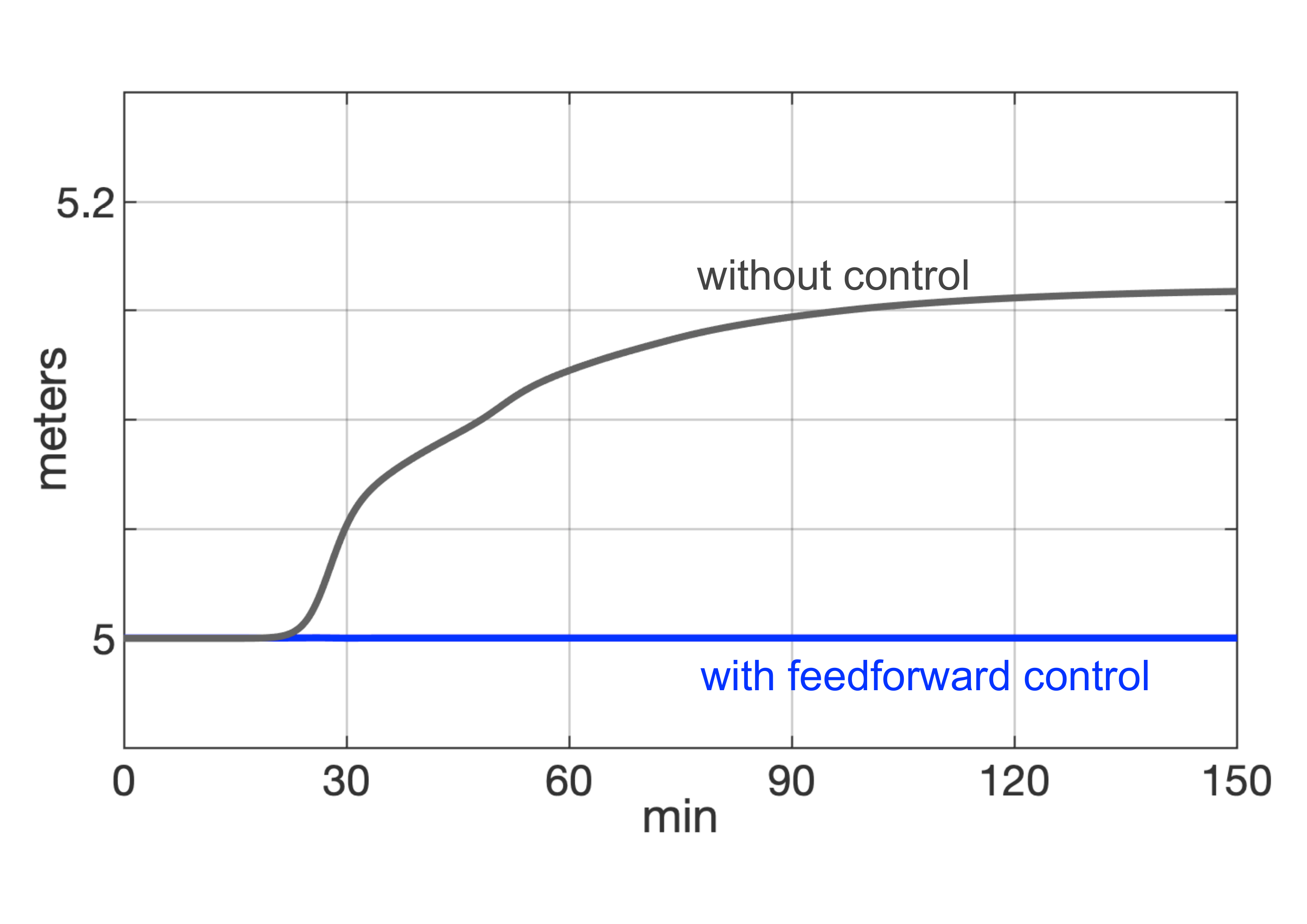}
\vspace{-0.6cm}
\caption{Time evolution of output water level $H(t,L)$}
\label{hplot}
\end{center}
\end{figure}

The system is subject to an isolated input disturbance which occurs around $t = 15$ minutes and is shown in Figure \ref{qplot}. The inflow rate $Q(t,0)$ is increased by about 25 \%, from 2 to 2.5 m$^3/$s (red curve). In this figure, we can also see the time evolution of the output flow rate $Q(t,L)$ with and without the feedforward control.

The control result is shown in Figure \ref{hplot}. With the feedforward control we see that the water level $H(t,L)$ (blue curve) remains constant at the set point $H^*_L = 5$ m despite the inflow disturbance. In contrast, without control, the same disturbance leads to an output level increase of about 16 cm (grey curve). The final steady state profile reached after the passage of the disturbance is  illustrated in Figure \ref{profileH}. 

Finally, let us also compute the parameter $\beta_L$ for this example. For the initial steady-state, we have:
\begin{equation}
Q^* = 2 \text{ m}^3\text{/s}	, \;\; H^*_L = 5 \text{ m}, \;\; V^*_L = 0.4 \text{ m/s} \; \; \text{and} \;\; \beta_L = \frac{3}{2} \Big(c_g^2 Q^* \Big)^{\!\! 1/3} = 3\,. 
\end{equation}
For the final steady-state, we have:
\begin{equation}
Q^* = 2.5 \text{ m}^3\text{/s}, \;\; H^*_L = 5 \text{ m}, \;\; V^*_L = 0.5 \text{ m/s} \; \; \text{and} \;\; \beta_L = \frac{3}{2} \Big(c_g^2 Q^* \Big)^{\!\! 1/3} = 3.23\,. 
\end{equation}
In both cases, we see that the stability condition $2\beta_L > V^*_L$ holds.

\section{Feedforward control in navigable rivers}

In navigable rivers the water is transported along the channel under the power of gravity through successive pools separated by control gates used for the control of the water level as illustrated in Figure \ref{river}. 
\begin{figure}[h!]
\begin{center}
\includegraphics[width=12cm]{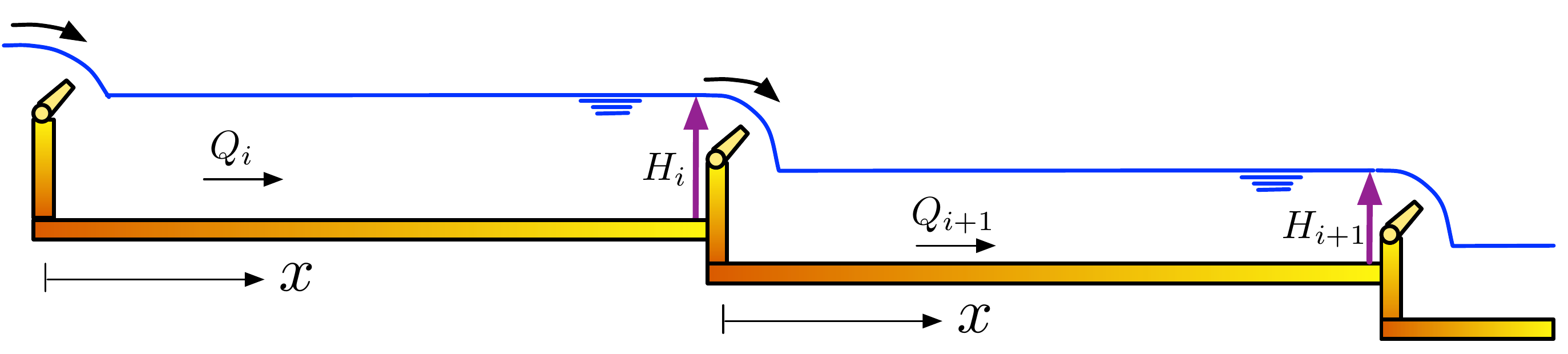}
\caption{Navigable river}
\label{river}
\end{center}
\end{figure}

In this section, for simplicity, we consider the ideal case of string of $n$ identical pools having the same length $L$ and the same rectangular cross section with a unit width. The channel dynamics are described by the following set of Saint-Venant equations
\begin{equation} \label{svriver}
\begin{matrix}
\p_t H_i + \p_x Q_i = 0,  \\[0.5em]
\p_t Q_i + \p_x \left(\dfrac {Q_i^2}{H_i} + \text{g} \dfrac{H_i^2}{2}\right)+ c_f Q_i^2= 0. 
\end{matrix} \hh i=1, \dots , n,
\end{equation}
and the following set of boundary conditions
\begin{equation} \label{bcriver}
\begin{matrix}
Q_1(t,0) = c_g\sqrt{\big[D_{\!\text{o}}(t)\big]^3}, \\[0.5em] Q_i(t,0) = Q_{i-1}(t,L) \hd i=2, \dots , n,\\[0.5em]
Q_i(t,L) = c_g\sqrt{\big[H_i(t,L)-U_i(t)\big]^3} \hd i=1, \dots , n, 
\end{matrix}
\end{equation}
where $H_i$ and $Q_i$ denote the water level and the flow rate in the $i$th pool, $U_i$ is the position of the $i$-th gate which is used as control action,  $c_f$ and $c_g$ are constant friction and gate shape coefficients respectively, $D_{\!\text{o}}(t)$ is the water head above the input gate considered here as the external measurable disturbance. Furthermore it is assumed that the water levels $H_i(t,L)$ are measurable at the downstream side of the pools.

Let us now assume that the objective is to find a set of  \textit{feedforward control laws} $U_i(t)$, function of the measured disturbance $D_{\!\text{o}}(t)$ and the measurable levels $H_i(t,L)$, such that each output $Y_i(t) = H_i(t,L) -H^*_L$ is identically zero.

In this framework, each pool can be considered as a dynamical system with a controlled output $Y_i(t) = H_i(t,L) - H^*_L$ and a disturbance input $D_{i-1}(t)$ = $Q_{i-1}(t,L)$. It is then natural to design the feedforward control laws $U_i$ for each pool on the pattern of the control which was derived in the previous section for a single pool, as follows:
\begin{subequations} \label{ffcontrolriver} \begin{align} 
&\text{For } i=1,\dots , n, \nonumber \\[0.5em]
&\p_t \widehat H_i + \p_x \widehat Q_i = 0,  \\[0.5em]
&\p_t \widehat Q_i + \p_x \left(\dfrac{\widehat Q_i^2}{\widehat H_i} + \text{g} \dfrac{\widehat H_i^2}{2}\right) + c_f \dfrac{\widehat Q_i^2}{\widehat H_i^2} = 0, \\[0.5em]
&\widehat H_i(t,L) = H^*_L,\\[0.5em]
&\widehat Q_i(t,0) = c_g\sqrt{\big[D_{i-1}(t)\big]^3}, \\[0.5em]
&D_i(t) = H_i(t,L)-U_i(t),	\\[0.5em]
&U_i(t) = H^*_L - \big(c_g^{-1} \widehat Q_i(t,L)\big)^{2/3}.
\end{align}\end{subequations}
\begin{theorem}
For the control system \er{svriver}, \er{bcriver}, \er{ffcontrolriver}, for all $i = 1, \dots , n$, assume that  the initial conditions satisfy
\begin{equation} \label{initriver}
H_i(0,x) = \widehat H_i(0,x), \;\; Q_i(0,x) = \widehat Q_i(0,x), \;\; \forall x \in [0,L], 
\end{equation}
with $H_i(0,L) = \widehat H_i(0,L) = H^*_L$.
Then, for all positive $t$ and for all $i = 1, \dots , n$, it holds that $Y_i(t) = H_i(t,L) - H^*_L = 0$. \qed
\end{theorem}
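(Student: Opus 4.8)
The plan is to reduce the whole cascade to $n$ successive applications of Theorem \ref{theorem1}, proceeding by induction on the pool index $i$. The key observation is that, although the physical river \er{svriver}, \er{bcriver} is a single \emph{coupled} system, the coupling runs only downstream: once the solution in the upstream pools is known, the inflow $Q_i(t,0)=Q_{i-1}(t,L)$ acting on pool $i$ becomes a prescribed function of time, so pool $i$ together with its copy falls exactly into the single-pool framework underlying Theorem \ref{theorem1}. I would therefore prove by induction the stronger statement that $(H_j,Q_j)=(\widehat H_j,\widehat Q_j)$ on all of $[0,+\infty)\times[0,L]$ for every $j\le i$, from which $H_j(t,L)=\widehat H_j(t,L)=H^*_L$, i.e. $Y_j\equiv 0$, is immediate.

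For the base case $i=1$, pool $1$ is driven by the external disturbance $D_{\!\text{o}}$ through $Q_1(t,0)=c_g\sqrt{[D_{\!\text{o}}]^3}$, while its copy is driven by $\widehat Q_1(t,0)=c_g\sqrt{[D_0]^3}$ with $D_0=D_{\!\text{o}}$, so both receive the same excitation. The control law $U_1=H^*_L-(c_g^{-1}\widehat Q_1(t,L))^{2/3}$ is exactly \er{nlcontlaw} for this pool with $\alpha(H,Q)=(c_g^{-1}Q)^{2/3}$ and $\beta(H,Q)=H-(c_g^{-1}Q)^{2/3}$, and the initial data agree by \er{initriver}. Theorem \ref{theorem1} then yields $(H_1,Q_1)=(\widehat H_1,\widehat Q_1)$, hence $H_1(t,L)=H^*_L$.

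For the inductive step I assume $(H_j,Q_j)=(\widehat H_j,\widehat Q_j)$ for $j\le i$ and must present pool $i+1$ as a single-pool problem driven by a disturbance common to the plant and to its copy. The crucial bookkeeping is to verify that the head $D_i(t)=H_i(t,L)-U_i(t)$ feeding copy $i+1$ reproduces the true inflow $Q_{i+1}(t,0)=Q_i(t,L)$. Inserting $(H_i,Q_i)$ into the downstream gate law of \er{bcriver} gives $H_i(t,L)-U_i(t)=(c_g^{-1}Q_i(t,L))^{2/3}$, hence $D_i=(c_g^{-1}Q_i(t,L))^{2/3}$, so that $\widehat Q_{i+1}(t,0)=c_g\sqrt{[D_i]^3}=Q_i(t,L)=Q_{i+1}(t,0)$; here the two gate models compose through the inversion identity $c_g\sqrt{[(c_g^{-1}q)^{2/3}]^3}=q$, while the induction hypothesis $Q_i(t,L)=\widehat Q_i(t,L)$ ties the copy's output to the plant's. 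With this common disturbance, pool $i+1$ and its copy again satisfy all hypotheses of Theorem \ref{theorem1} — identical dynamics, identical left-boundary excitation, the control \er{ffcontrolriver} written in the form \er{nlcontlaw}, and agreeing initial data from \er{initriver} — so $(H_{i+1},Q_{i+1})=(\widehat H_{i+1},\widehat Q_{i+1})$ and in particular $H_{i+1}(t,L)=H^*_L$.

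I expect the only genuine obstacle to be this consistency verification: one must confirm that feeding the head $D_i$ into the upstream gate model of copy $i+1$ regenerates precisely the flow that pool $i$ delivers to pool $i+1$, which is where the invertibility of the overshot-gate relation and the induction hypothesis are used jointly. Everything else is a faithful transcription of the single-pool argument of Theorem \ref{theorem1}, with uniqueness of the Cauchy problem for \er{svriver}, \er{bcriver} supplying the final identification of plant and copy at each stage.
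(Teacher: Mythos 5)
Your proposal is correct and is exactly the argument the paper has in mind: the paper dispatches this theorem in one line as ``an immediate consequence of the proof of Theorem \ref{theorem1}'', and your pool-by-pool induction, with the consistency check that the gate law forces $c_g\sqrt{[D_i(t)]^3}=Q_i(t,L)=Q_{i+1}(t,0)=\widehat Q_{i+1}(t,0)$, is precisely what makes that one-liner rigorous. (Minor remark: that consistency already follows from the downstream gate relation in \eqref{bcriver} and the definition of $D_i$ alone, so the induction hypothesis $Q_i(t,L)=\widehat Q_i(t,L)$ is not actually needed for it.)
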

\noindent The proof of this theorem is clearly an immediate consequence of the proof of Theorem \ref{theorem1}.

In order to discuss the system stability we introduce the following notations:
\begin{equation}
\begin{split}
&\text{For } i=1,\dots , n, \\[0.5em]
	&h_i(t,x) = H_i(t,x) - H^*(x), \hd q_i(t,x) = Q_i(t,x) - Q^*, \\[0.5em]
	&\hat h_i(t,x) = \widehat H_i(t,x) - H^*(x), \hd \hat q_i(t,x) = \widehat Q_i(t,x) - Q^*, \\[0.5em]
	& \tilde h_i(t,x) = h_i(t,x) - \hat h_i(t,x), \hd  \tilde q_i(t,x) = q_i(t,x) - \hat q_i(t,x),\\[0.5em]
	&\tilde z_i = \bpm \tilde h_i \\[0.5em] \tilde q_i \epm, \;\; \hat z_i = \bpm \hat h_i \\[0.5em] \hat q_i \epm.
	\end{split}
\end{equation}
The linear error system is written
\begin{equation} \label{errorlinriver}
\p_t \tilde z_i + A(x)\p_x \tilde z_i + B(x) \tilde z_i = 0, \hh i= 1, ... , n,
\end{equation}
with the boundary conditions
\begin{equation} \label{bcerrorlinriver}
\begin{split}
&\tilde q_i(t,0) = 0	, \\[0.5em]
&\tilde q_i(t,L) = \beta_L \tilde h_i(t,L),
\end{split}
\hh i= 1, ... , n.
\end{equation}
Here we see that the error subsystems corresponding to each pool are decoupled. Therefore the stability of the global error system \eqref{errorlinriver}, \eqref{bcerrorlinriver} directly results from the stability of the error system for a single pool which was established in Section \ref{sectionchannel}.

On the other hand, the linearization of the controller system is written
\begin{equation} \label{controllinriver}
\p_t \hat z_i + A(x)\p_x \hat z_i + B(x) \hat z_i = 0, \hh i= 1, ... , n,
\end{equation}
with the boundary conditions
\begin{equation} \label{bccontrollinriver}
\begin{split}
&\hat q_1(t,0) = \alpha_0 d(t), \\[0.5em]
&\hat q_{i}(t,L)  = \hat q_{i+1}(t,0) - \tilde q_i(t,L), \hd i= 1, ... , n-1, \\[0.5em]
&\hat h_i(t,L) = 0, \hd i= 1, ... , n. 
\end{split}
\end{equation}
Here, we remark that the subsystems corresponding to each pool are interconnected through the boundary conditions and their respective stabilities cannot be considered separately. Therefore we introduce the following Lyapunov function candidate
\begin{equation}
\mb{V} = \sum_{i=1}^n \int_0^L  \omega_i\big(\hat z_i^T P(x) \hat z_i\big) dx
\end{equation}
where $\omega_i$ are positive coefficients to be determined.

The time derivative of this Lyapunov function along the system solutions is then as follows:
\begin{equation}
\dfrac{d\mb{V}}{dt} = {\cal I}(t) + {\cal B}(t)	
\end{equation}
with the ``internal'' term
\begin{equation} \label{dVdtlin}
{\cal I}(t) = - \sum_{i=1}^n \omega_i \int_0^L \hat z_i^T \Big( -M_x(x) + B^T(x)P(x) + P(x) B(x) \Big) \hat z_i \, dx  
\end{equation}
and the ``boundary'' term
\begin{equation}
{\cal B}(t) = - \sum_{i=1}^n \omega_i \Big[ z_i^T M(x) z_i \Big]_0^L .
\end{equation}
The matrices $M(x)$ and $-M_x(x) + B^T(x)P(x) + P(x) B(x)$ are those defined in the previous section by equations \eqref{matrixM} and \eqref{matrixMPB} respectively. It follows directly that the interior term is negative : ${\cal I}(t) < 0$. Moreover, using the definition of $M(x)$ and the boundary conditions \eqref{bccontrollinriver}, the boundary term ${\cal B}(t)$ may be written as follows:
\begin{equation}
\begin{split}
{\cal B}(t) =  &- \omega_1	V^*(0)(gH^*(0) - V^{*2}(0)) h_1^2(t,0) - \omega_n V^*(L) \hat q_n^2(t,L)
\\[0.5em]
&- \sum_{i=1}^{n-1} \bpm \hat q_i(t,L) & \hat h_{i+1}(t,0) \epm  \Omega_i \bpm  \hat q_i(t,L) \\[0.5em]  \hat h_{i+1}(t,0) \epm \\[0.5em] 
&+ 2 \,\omega_1\alpha_0(gH^*(0) - V^{*2}(0))  \hat h_1(t,0) d(t) + \omega_1 \alpha_0^2 V^*(0) d^2(t) \\[0.5em]
&+ \sum_{i=1}^{n-1} \omega_{i+1} V^*(0) \Big[ -2 \hat q_i(t,L) \tilde q_i(t,L) + V^*(0) \tilde q_i^2(t,L) \Big]
\end{split}
\end{equation}
where for $i = 1, \dots ,n-1$
\begin{equation}
\Omega_i =  \bpm \omega_i V^*(L) - \omega_{i+1}V^*(0) & \hd - \omega_{i+1}(gH^*(0) - V^{*2}(0)) \\[1em] - \omega_{i+1}(gH^*(0) - V^{*2}(0)) & \hd \omega_{i+1}(gH^*(0) - V^{*2}(0))V^*(0) \epm.	
\end{equation} 
Assume that the positive coefficients $\omega_i$ are selected according to
\begin{equation}
\dfrac{\omega_{i+1}}{\omega_i} = \varepsilon 
\end{equation}
where $\varepsilon$ is a  positive constant to be determined. Then
\begin{equation}
\Omega_i =  \omega_i \bpm V^*(L) - \varepsilon V^*(0) & \hd - \varepsilon (gH^*(0) - V^{*2}(0)) \\[1em] - \varepsilon (gH^*(0) - V^{*2}(0)) & \hd \varepsilon (gH^*(0) - V^{*2}(0))V^*(0) \epm.	
\end{equation} 
Using this expression, it can be seen that, under the subcritical condition \eqref{subcritical2}, each matrix $\Omega_i$ is positive definite provided $\varepsilon$ is selected such that
\begin{equation}
	\varepsilon < \dfrac{V^*(0)V^*(L)}{gH^*(0)}.
\end{equation}
Hence it follows that the (linearized) controller system \eqref{controllinriver}, \eqref{bccontrollinriver}, with inputs $d(t)$ and $\tilde q_i(t,L)$, is $L^2$-input-to-state stable with an estimate of the form 
\begin{equation}
	\lVert \hat z(t,\cdot)\rVert_{L^{2}} \ls C_{1}\lVert \hat z(t,0)\rVert_{L^{2}}e^{-\nu t}+C_{2} \; \sup _{t \gs 0} \Big[ |d(t)| + \sum_{i=1}^{n-1} |\tilde q_i(t,L)|\Big] 
\end{equation}
where $\nu$ and $C_i$ $(i=1,2)$ are positive constants. Thus, here again, we can conclude that the state of the system is bounded and that the feedforward control objective is achieved. Let us now illustrate the control performance through simulation experiments.

\subsubsection*{Simulation experiments}

We consider a channel with two identical successive pools as shown in Figure \ref{river}, with the following parameters:
\begin{align*}
&\text{length:} \;\; L = 5000 \text{ (meters)}, \\
&\text{friction coefficient:} \;\; c_f = 0.008,	\\
&\text{discharge coefficient:} \;\; c_g = 2 \text{ m}^{4/3} \text{ s}^{-1}. 
\end{align*}
The results of a simulation of the feedforward control is given in Figure \ref{simfeedriver} with set points $H^*_L = 5$ meters in the first pool and $H^*_L = 4.9$ meters in the second pool. 
\begin{figure}[h]
  \centering
  \begin{subfigure}[b]{0.45\textwidth}
     \centering \includegraphics[width=5.8cm]{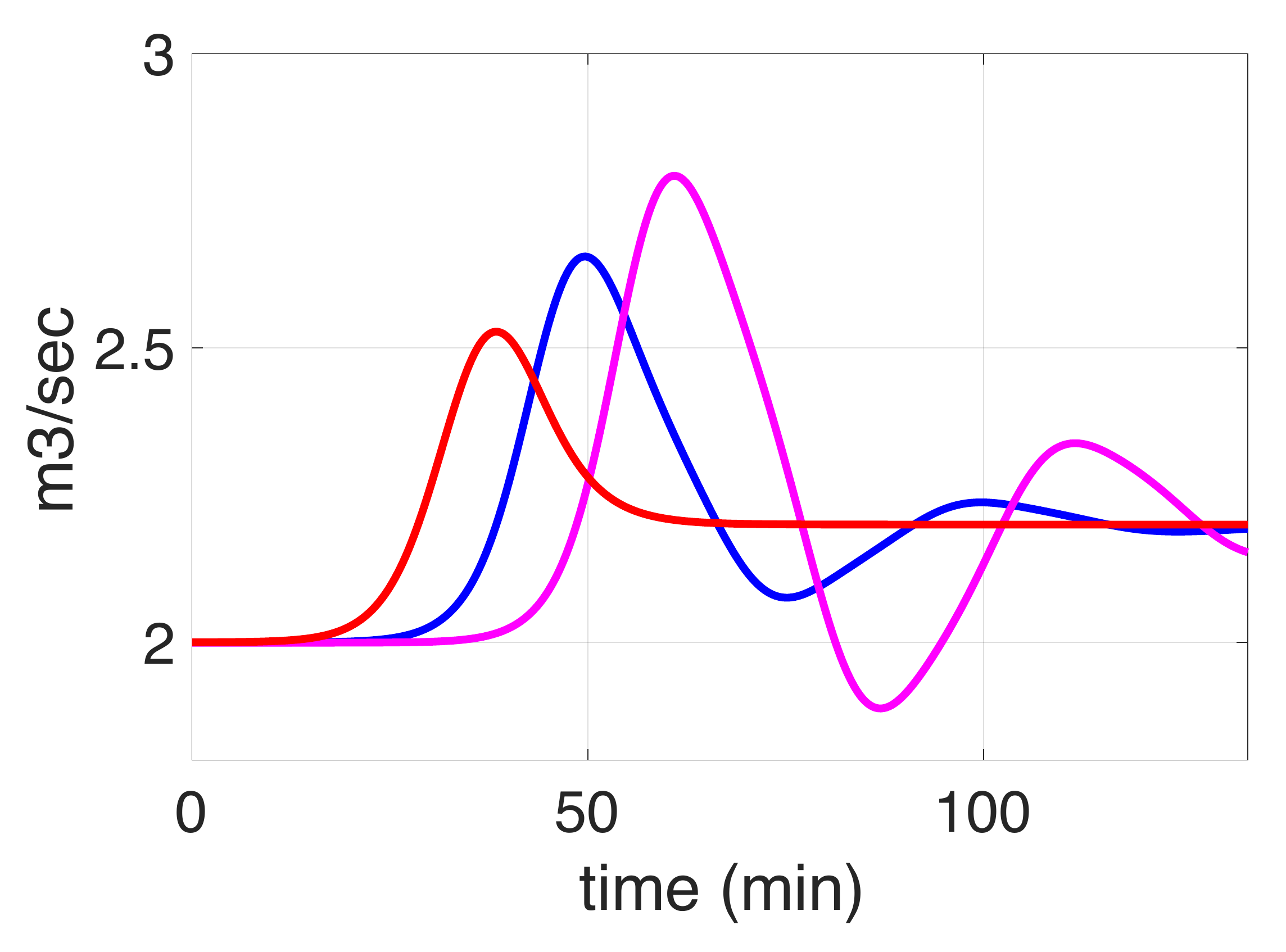}
     \caption{Flow rate $Q(t,L)$}
     \label{flow}
  \end{subfigure}
  \hspace{5pt}\vspace{0.6cm}
  \begin{subfigure}[b]{0.45\textwidth}
     \centering \includegraphics[width=5.5cm]{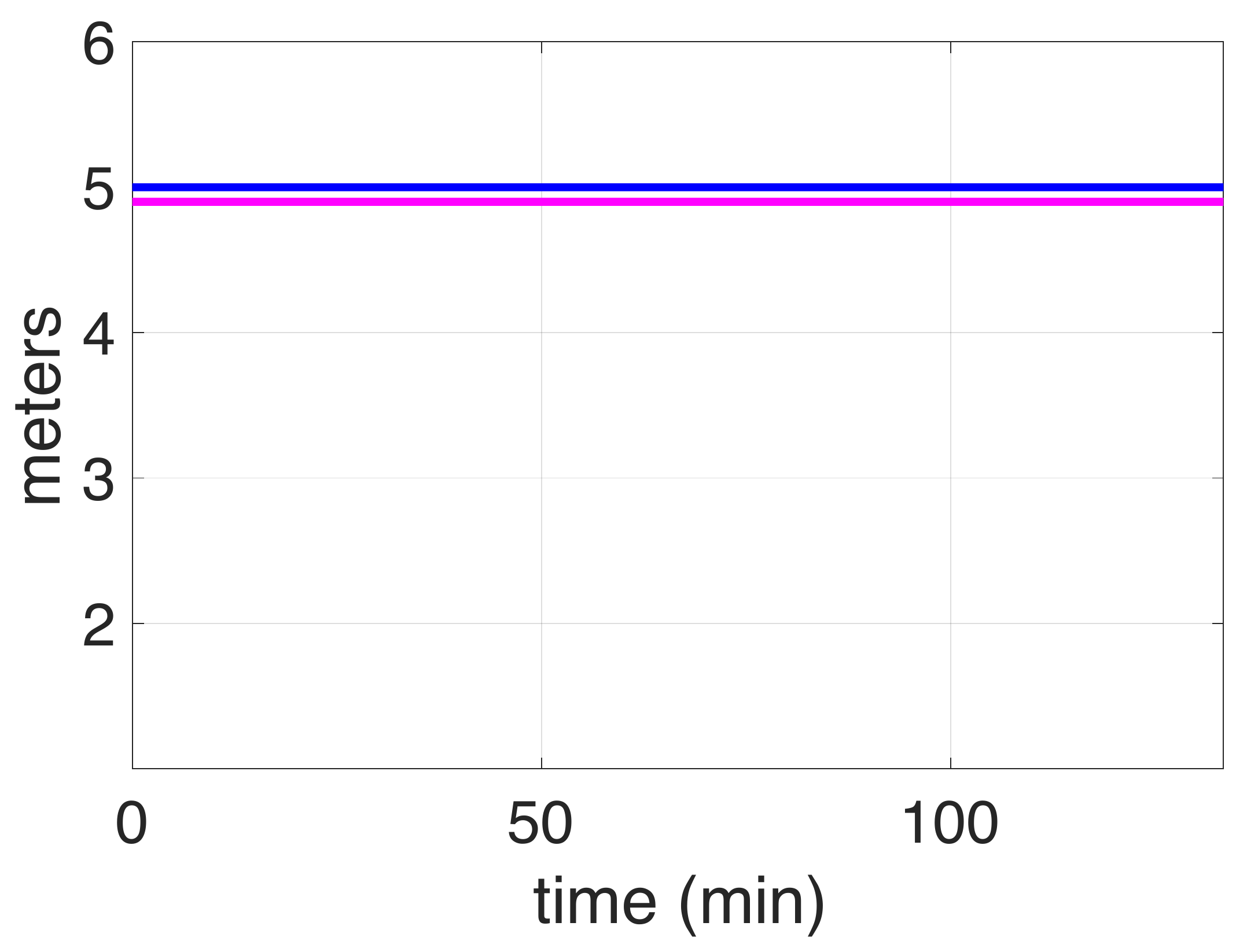}
     \caption{Water level $H(t,L)$}
     \label{level}
  \end{subfigure}
  \begin{subfigure}[b]{0.45\textwidth}
     \centering \includegraphics[width=5.8cm]{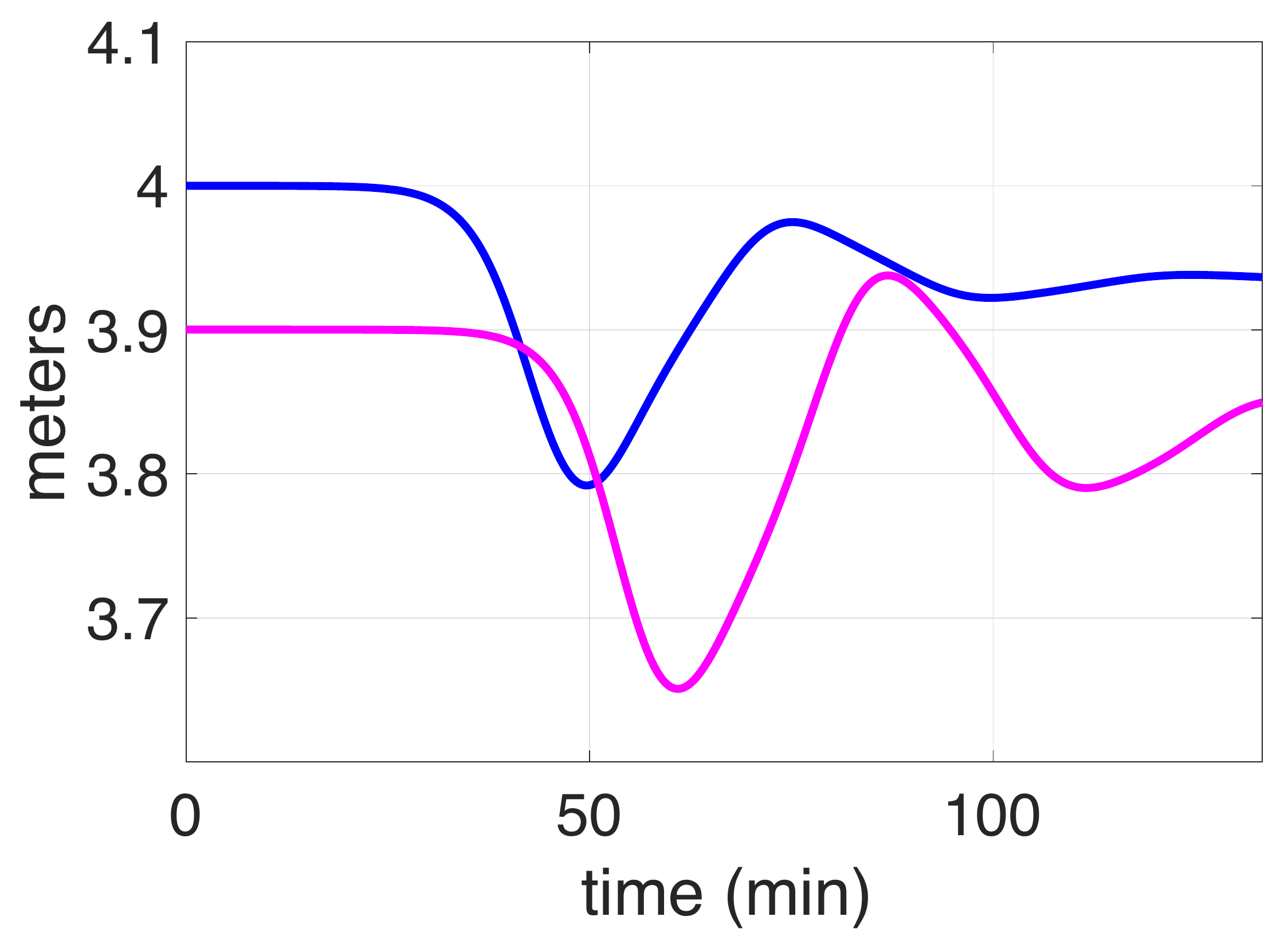}
     \caption{Control actions $U(t)$}
     \label{control}
  \end{subfigure}
  \caption{Simulation of feedforward control in a channel with two successive pools: {\color{red}$\scriptstyle \blacksquare$} = disturbance input, {\color{blue}$\scriptstyle \blacksquare$} = first pool, {\color{magenta}$\scriptstyle \blacksquare$} = second pool.}
  \label{simfeedriver}
\end{figure}

At the initial time ($t=0$), the system is at steady state with a constant flow rate per unit of width $Q^* = 2$ m$^3/$s and boundary water levels $H_1(0,L) = 5$ m and $H_2(0,L) = 4.9$ m respectively. 
The system is subject to an input disturbance which occurs around $t = 15$ minutes and is shown in Figure \ref{flow} (red curve). This disturbance takes the form of a pulse starting from the steady-state value 2 m$^3/$s, then peaking at 2.5 m$^3/$s (i.e. an increase of about 25 \%), and finally stabilizing at 2.2  m$^3/$s. In this figure, we can also see the time evolution of the flow rates $Q(t,L)$ in the two pools under the  feedforward control (blue and magenta curves). The control actions computed by the two feedforward controllers are shown in Figure \ref{control}. Obviously, as expected, we can see in Figure \ref{level} that the feedforward control is perfectly efficient and that the water levels $H(t,L)$ in the two pools are totally insensitive to the disturbance.

The simulation is done with a friction coefficient value which is in the range of usual values for natural channels and rivers. In this case, there is however a drawback, that is very visible in Figure \ref{flow}, under the form of an amplification of the oscillations of the flow rates in the downstream direction. This can be detrimental in some practical applications. In order to mitigate this phenomenon, some filtering of the control must be applied. 

A simple very natural and efficient way to implement such filtering is to fictitiously increase the value of the friction coefficient in the feedforward controller. This strategy is illustrated in Figure \ref{simfeedriver2} where the control laws \eqref{ffcontrolriver} are implemented with a fake overestimated value of the friction $\hat c_f = 0.024$. The nice performance of this control can be appreciated in Figures \ref{flow2} and \ref{level2}. Indeed, it can be observed that, in this case, the flow rates are no longer amplified in the downstream direction while the water levels remain nevertheless rather insensitive to the disturbance effect.
\begin{figure}[htp]
  \centering 
  \begin{subfigure}[b]{0.45\textwidth}
     \centering \includegraphics[width=6.0cm]{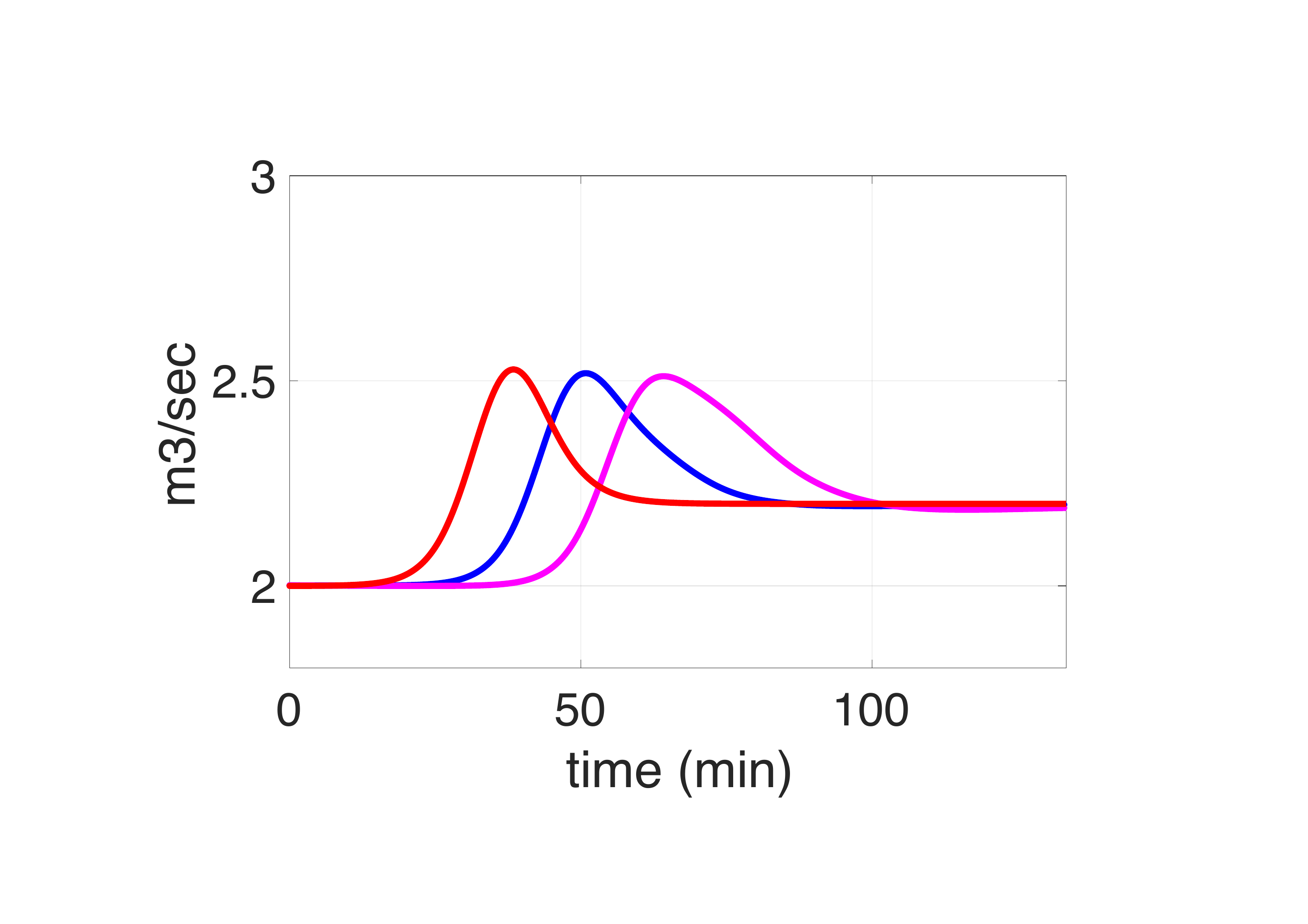}
     \caption{Flow rate $Q(t,L)$}
     \label{flow2}
  \end{subfigure}
  \hspace{5pt}\vspace{0.6cm}
  \begin{subfigure}[b]{0.45\textwidth}
     \centering \includegraphics[width=5.7cm]{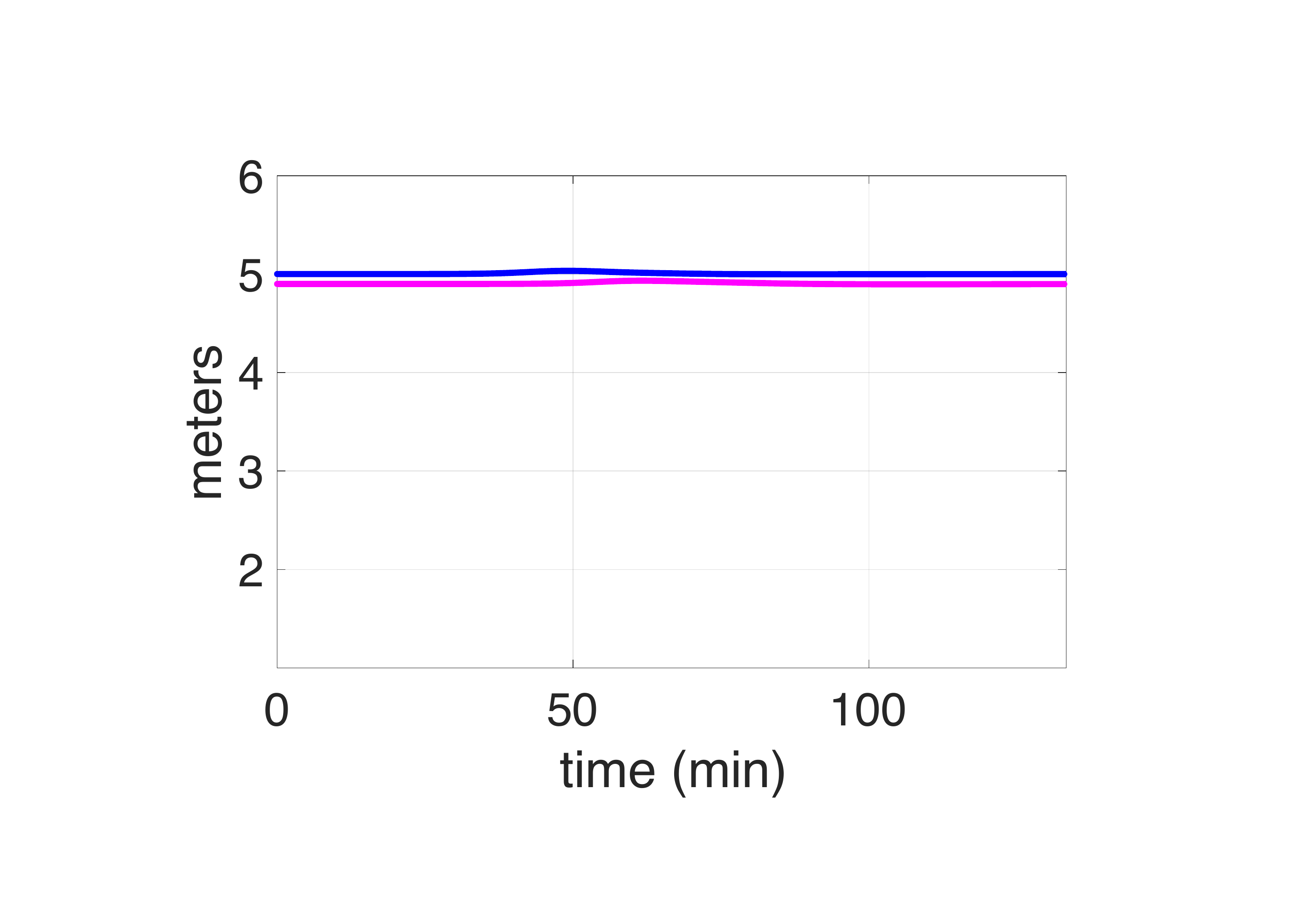}
     \caption{Water level $H(t,L)$}
     \label{level2}
  \end{subfigure}
  \begin{subfigure}[b]{0.45\textwidth}
     \centering \includegraphics[width=6.0cm]{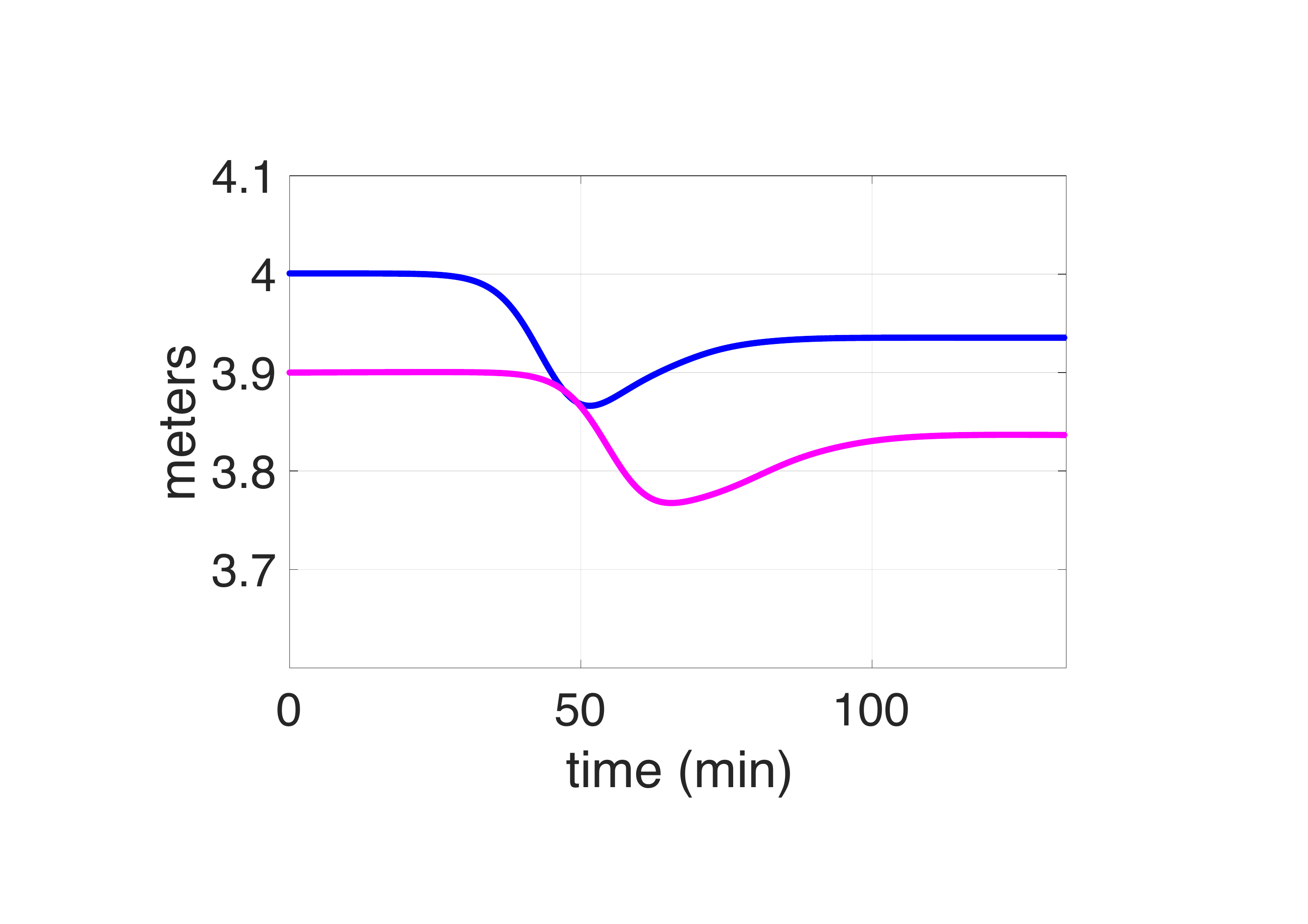}
     \caption{Control actions $U(t)$}
     \label{control2}
  \end{subfigure}
  \caption{Reduced amplification of the flow oscillations with a modified feedforward controller: {\color{red}$\scriptstyle \blacksquare$} = disturbance input, {\color{blue}$\scriptstyle \blacksquare$} = first pool, {\color{magenta}$\scriptstyle \blacksquare$} = second pool.}
  \label{simfeedriver2}
\end{figure}

\section{Conclusions}
In this paper, we have addressed the design of feedforward controllers for a general class of $2 \times 2$ hyperbolic systems with a disturbance input located at one boundary and a control actuation at the other boundary. The goal is to design a feedforward control that makes the system output insensitive to a measured disturbance input. 

 The problem was first stated and studied in the frequency domain for a simple linear system. Then, our main contribution was to extend the theory, in the time domain, to general nonlinear hyperbolic systems. First it has been shown that there exists an ideal causal feedforward dynamic controller that achieves perfect control. In a second step, sufficient conditions have been given under which the controller, in addition to being causal, ensures the stability of the overall control system. 
 
 The method has been illustrated with an application to the control of an open channel represented by Saint-Venant equations where the objective is to make the output water level insensitive to the variations of the input flow rate. In the last section, we have discussed a more complex application to a cascade of pools where a blind application of perfect feedforward control can lead to detrimental oscillations. A pragmatic way of modifying the control law to solve this problem has been proposed and validated with a simulation experiment. 	
 
 Finally, we would also like to mention that the application to the Saint-Venant equations with hydraulic gates can be transposed to gas pipelines with compressors described by the isentropic Euler equations. The interested reader can consult the references \cite{GugHer09}, \cite{DicGugLeu10}, \cite{GugLeuTam12} and \cite{BasCor17}.

\section*{Acknowledgments}
This research was supported by the ANR project Finite4SoS (No.ANR 15-CE23-0007), INRIA team CAGE, the NSF CPS Synergy project 
"Smoothing Traffic via Energy-efficient Autonomous Driving" (STEAD)
CNS 1837481 and the French Corps des IPEF.

\end{document}